\newtheorem{theorem}{Theorem}[section]
\newtheorem{prop}{Proposition}[section]
\theoremstyle{remark}
\theoremstyle{definition}
\newtheorem{defi}{Definition}[section]
\theoremstyle{remark}
\newtheoremstyle{myremark}{}{}{\color{blue}\small}{}{\color{blue}\bfseries}{}{ }{}
\theoremstyle{myremark}
\newcommand{\E}{\mathbb{E}} %for expectation
\newcommand{\R}{{\mathbb R}}
\newcommand{\N}{{\mathbb N}}
\DeclareMathOperator{\Int}{int}
\DeclareMathOperator{\cl}{cl}
\newcommand{\mff}{\mathfrak{f}}
\begin{document}

\renewcommand*{\thefootnote}{\fnsymbol{footnote}}

\begin{center}
\Large{\textbf{Intermittency and multiscaling in limit theorems}}\\
%or\\
%Contrast between convergence in distribution and almost sure convergence using intermittency and multiscaling\\
\bigskip
\large{\today}\\
\bigskip
Danijel Grahovac$^1$\footnote{dgrahova@mathos.hr}, Nikolai N.~Leonenko$^2$\footnote{LeonenkoN@cardiff.ac.uk}, Murad S.~Taqqu$^3$\footnote{murad@bu.edu}\\
\end{center}

\bigskip
\begin{flushleft}
\footnotesize{
$^1$ Department of Mathematics, University of Osijek, Trg Ljudevita Gaja 6, 31000 Osijek, Croatia\\
$^2$ School of Mathematics, Cardiff University, Senghennydd Road, Cardiff, Wales, UK, CF24 4AG}\\
$^3$ Department of Mathematics and Statistics, Boston University, Boston, MA 02215, USA
\end{flushleft}

\bigskip

\textbf{Abstract: } It has been recently discovered that some random processes may satisfy limit theorems even though they exhibit intermittency, namely  an unusual growth of moments. In this paper we provide a deeper understanding of these intricate limiting phenomena. We show that intermittent processes may exhibit a multiscale behavior involving growth at different rates. To these rates correspond different scales.  In addition to a dominant scale, intermittent processes may exhibit secondary scales. The probability of these scales decreases to zero as a power function of time. For the analysis, we consider large deviations of the rate of growth of the processes. Our approach is quite general and covers different possible scenarios with special focus on the so-called supOU processes.

\medskip

\textbf{Keywords: } intermittency, multiscaling, limit theorems, large deviations, convergence of moments

\bigskip

\section{Introduction}

Limit theorems in probability have a long history and yet the understanding of general principles beyond the case of independent and identically distributed (i.i.d.) sequences is still far from complete. In the i.i.d. case, the type of limit depends typically only on the finiteness of moments of the underlying distribution. Recent results show that under dependence this simple characterization may break down. Namely, processes aggregated from finite variance sequences may converge to infinite variance processes and vice versa. See e.g.~\cite{pipiras2004slow,konstantopoulos1998macroscopic,leipus2003random,pilipauskaite2014joint,doukhan2019discrete,surgailis2004stable,sly2008nonstandard} for such examples.         

We were motivated   by results involving Ornstein-Uhlenbeck type processes ({supOU}).  SupOU processes are stationary processes for which the marginal distribution and the dependence structure can be modeled independently (see \cite{bn2001,barndorff2013levy,
barndorff2011multivariate,GLST2019Bernoulli,
barndorff2018ambit}). Since supOU processes are continuous time processes, one may easily aggregate them in order to obtain processes with stationary increments. It turns out that the four classes of processes can be obtained in the limit after suitable normalization, namely \cite{GLT2019Limit,GLT2019LimitInfVar}:
\begin{itemize}
    \item Brownian motion,
    \item fractional Brownian motion,
    \item a stable L\'evy process with  infinite variance and stationary independent increments,
     \item  a stable process with infinite variance and stationary dependent increments.
\end{itemize}
This convergence may be traced to a specific asymptotic behavior of moments, called \textit{intermittency} (see \cite{GLST2016JSP,GLST2019Bernoulli}). Such behavior resembles a similar one appearing in solutions of some stochastic partial differential equations (SPDE) (see e.g.~\cite{carmona1994parabolic,chen2015moments,gartner2007geometric,zel1987intermittency,khoshnevisan2014analysis,chong2018almost,chong2019intermittency}). In \cite{GLST2019Bernoulli,GLT2019Limit,GLT2019MomInfVar} a large class of integrated supOU processes has been showed to be intermittent.

The purpose of this paper is to provide a deeper understanding of the aforementioned limiting phenomena with the focus on supOU processes. Beyond limit theorems, one may investigate large deviations principles. We will show that the large deviations principle \textit{fails} to hold in its usual form for the integrated supOU processes with intermittency and long-range dependence. The large (or moderate) deviation statements provide bounds for the probabilities of the form
\begin{equation*}
P \left( |X(t)| > c b_t \right),
\end{equation*}
where $X$ is an aggregated process (partial sum or integrated process), $c>0$ and $\{b_t\}$ is a sequence of constants. One typically deals with processes for which such probabilities decay exponentially as $t\to \infty$. Hence, one considers
\begin{equation*}
\frac{1}{s_t} \log P \left( |X(t)| > c b_t \right),
\end{equation*}
in the limit as $t\to \infty$ for some sequence $\{s_t\}$ regularly varying at infinity (usually $s_t=t$). In contrast, for intermittent supOU processes, the probabilities of large deviations decay as a power function of $t$. To assess the rate of this decay we shall investigate
\begin{equation*}
\frac{1}{\log t} \log P \left( |X(t)| > c b_t \right).
\end{equation*}
To obtain such statements, we consider the large deviations not of the process ${X(t)}$ itself, but of the \textit{rate of growth} of the process 
$$
\log |X(t)|/\log t
$$
as $ t\rightarrow \infty$. The crucial point here is the observation that the rate function in such large deviations principle is the Legendre transform of the scaling function which measures the rate of growth of moments (see Sec.~\ref{sec3} for details).

We note that large and moderate deviations have been investigated in \cite{macci2017asymptotic} for the partial sums of a subclass of short-range dependent supOU processes satisfying the classical limit theorem with Brownian motion in the limit. Our results show, however, that the classical large deviation principle with exponentially decaying probabilities does not hold for the supOU processes with intermittency and, in particular, the results of \cite{macci2017asymptotic} cannot be extended to the intermittent case.

We show that intermittency may imply that the process may have different rates of growth, i.e.~it exhibits different scales. We will refer to such behavior as \textit{multiscaling} as this resembles the phenomena of multiscaling or separation of scales in physics literature (see e.g~\cite{carmona1994parabolic,zel1987intermittency,bertini1995stochastic,fujisaka1984theory,molchanov1991ideas,frisch1995turbulence}). Although these notions are widespread in physics, their presence has not been observed and properly described in the context of limit theorems in probability. Related phenomena also appear in the SPDE theory (see e.g.~\cite{khoshnevisan2014analysis,carmona1994parabolic,zel1987intermittency}).

Our results provide an interpretation of intermittency. Borrowing words from the monograph \cite[p.~84]{den2008large} who applied them to the parabolic Anderson model, we can view intermittency as a phenomenon where the dominant peaks of the process are localized on \textit{random islands} which occupy a fraction of the support that vanishes as time tends to infinity. Nevertheless, on these islands the peaks are so high that they determine the growth of the moments (see also \cite[p.~356]{bakry1994lectures}). Each higher moment is determined by a smaller fraction of the peaks. But, because we focus on the growth of moments, the use of the scaling function cannot be expected to reveal all the scales that the process may possess. We illustrate this by a simple example in Sec.~\ref{sec6}.

We conjecture that the multiscale phenomena are in the background of many peculiar limit theorems like the ones established in \cite{pipiras2004slow,konstantopoulos1998macroscopic,leipus2003random,pilipauskaite2014joint,doukhan2019discrete,surgailis2004stable,sly2008nonstandard}. Our results provide a general principle which enables investigating multiscale phenomena as soon as the asymptotic behavior of moments is available. For example, intermittency has also been confirmed in the so-called trawl processes \cite{grahovac2018intermittency}. Both trawl and supOU processes belong to the class of ambit processes (see \cite{barndorff2018ambit}) where more tractable examples can be expected.

The paper is organized as follows. In Sec.~\ref{sec2} we define intermittency and discuss its relation with limit theorems. In Sec.~\ref{sec3} we establish a general approach to large deviations of the rate of growth of the process. These results are then applied in Sec.~\ref{sec4} to several typical scenarios in limit theorems. In Sec.~\ref{subsec:34} we focus on the supOU processes. The results are illustrated by the simulations in Sec.~\ref{sec:simulations} and in Sec.~\ref{sec6} we provide some concluding remarks and a discussion.

\section{Intermittency}\label{sec2}

%=======================

Intermittency in the context of limit theorems has been introduced in \cite{GLST2016JSP,GLST2019Bernoulli} by adapting the similar notion from the theory of SPDEs (see e.g.~\cite{carmona1994parabolic,chen2015moments,gartner2007geometric,zel1987intermittency,khoshnevisan2014analysis,chong2018almost,chong2019intermittency}). Suppose that $X=\{X(t),\ t \geq 0\}$ is a process for which we would like to measure how fast its moments grow as $t\to \infty$. The \textit{scaling function} of $X$ at the point $q \in \R$ is
\begin{equation}\label{deftau}
\tau(q) = \tau_X(q) = \lim_{t\to \infty} \frac{\log \E |X(t)|^q}{\log t},
\end{equation}
where we assume the limit exists, possibly equal to $\infty$. If $\E|X(t)|^q=\infty$ for $t\geq t_0$, then $\tau(q)=\infty$. Note that $\tau(0)=0$ and that
\begin{equation*}
\frac{\tau(q)}{q} = \lim_{n\to \infty} \frac{\log  \left\lVert X(t) \right\rVert_q}{\log t},
\end{equation*}
where $\left\lVert X(t) \right\rVert_q= \left(  \E |X(t)|^q \right)^{1/q}$, which is the $L^q$ norm for $q\geq 1$. The following proposition lists some properties of $\tau$ and extends \cite[Prop.~2.1]{GLST2016JSP} to negative $q$ values. In \cite[Prop.~2.1]{GLST2016JSP} the assumption $\tau(q)\geq 0$ is missing in the statement that $\tau$ is nondecreasing.

\begin{prop}\label{propertiesoftau}
Suppose that $\tau$ is the scaling function of some process $X$ and let $$\mathcal{D}_{\tau}=\{q \in \R : \tau(q)<\infty\}. $$
 Then
\begin{enumerate}[(i)]
  \item $\tau$ is convex.
  \item $q \mapsto \tau(q)/q$ is nondecreasing on $\mathcal{D}_{\tau}$.
  \item If $\tau(q')\geq 0$ for some $q'>0$, then $\tau(q)\geq 0$ for every $q\geq q'$ and $\tau$ is nondecreasing on $\mathcal{D}_{\tau} \cap [q',\infty)$. In particular, if $\tau(q)\geq 0$ for any $q>0$, then $\tau$ is nondecreasing on $\mathcal{D}_{\tau} \cap [0,\infty)$.
  \item For any $q<0$, one has
  \begin{equation}\label{e:sfnegqbound}
  \tau(q) \geq q \inf_{q'>0} \frac{\tau(q')}{q'}.
  \end{equation}
  In particular, for any $q<0$ it holds that $\tau(q) \geq -\tau(-q)$.
\end{enumerate}
\end{prop}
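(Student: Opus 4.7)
The plan is to reduce everything to (i) convexity together with the normalization $\tau(0)=0$; parts (ii)--(iv) will then follow from elementary manipulations of chords through the origin on a convex graph.

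For (i), I would establish convexity at the pre-limit level: for each fixed $t$, the map $q\mapsto \log \E|X(t)|^q$ is convex in $q$ by H\"older's inequality. Specifically, for $q=\lambda q_1+(1-\lambda)q_2$ with $\lambda\in[0,1]$, write $|X(t)|^q=|X(t)|^{\lambda q_1}|X(t)|^{(1-\lambda)q_2}$ and apply H\"older with conjugate exponents $1/\lambda$ and $1/(1-\lambda)$ to obtain $\E|X(t)|^q\leq (\E|X(t)|^{q_1})^\lambda(\E|X(t)|^{q_2})^{1-\lambda}$. Taking $\log$, dividing by $\log t>0$, and passing to the limit preserves the inequality, giving convexity of $\tau$ on $\R$ (with the convention $\infty$-values allowed on the boundary of $\mathcal{D}_\tau$).

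For (ii), the key observation is that $\tau(0)=0$, so the slope of the chord from $0$ to $q$ is exactly $\tau(q)/q$, and by convexity such slopes are nondecreasing in $q$. I would verify this in the three sign-cases. For $0<q_1<q_2$, write $q_1=(q_1/q_2)q_2+(1-q_1/q_2)\cdot 0$ and apply convexity to get $\tau(q_1)\leq (q_1/q_2)\tau(q_2)$. For $q_1<q_2<0$, the symmetric argument $q_2=(q_2/q_1)q_1+(1-q_2/q_1)\cdot 0$ yields $\tau(q_2)\leq(q_2/q_1)\tau(q_1)$; dividing by $q_2<0$ flips the inequality to $\tau(q_2)/q_2\geq\tau(q_1)/q_1$. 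For $q_1<0<q_2$, write $0=\frac{-q_1}{q_2-q_1}q_2+\frac{q_2}{q_2-q_1}q_1$ and convexity gives $0\leq -q_1\tau(q_2)+q_2\tau(q_1)$; dividing by $q_1q_2<0$ produces the desired inequality.

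Part (iii) is then a short consequence: by (ii), $\tau(q)/q\geq \tau(q')/q'\geq 0$ for $q\geq q'>0$, so $\tau(q)\geq 0$; and for $q'\leq q_1<q_2$ in $\mathcal{D}_\tau$, (ii) together with $\tau(q_1)\geq 0$ gives $\tau(q_2)\geq(q_2/q_1)\tau(q_1)\geq\tau(q_1)$. For (iv), fix $q<0$; for every $q'>0$, (ii) yields $\tau(q)/q\leq \tau(q')/q'$, and multiplying by $q<0$ reverses the inequality to $\tau(q)\geq q\,\tau(q')/q'$. Since $q<0$, taking the supremum of the right-hand side over $q'>0$ is the same as $q\inf_{q'>0}\tau(q')/q'$, establishing \eqref{e:sfnegqbound}. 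The particular case $\tau(q)\geq-\tau(-q)$ is the choice $q'=-q$.

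The only subtle point is making sure the arguments stay consistent with $\infty$-values on the boundary of $\mathcal{D}_\tau$ and with the direction of inequalities when dividing by a negative quantity; the sign-case bookkeeping in (ii) is the main place where mistakes could creep in, but no deep obstacle is involved.
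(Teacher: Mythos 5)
Your proof is correct, and it reorganizes the paper's argument in a genuinely cleaner way. The paper establishes part (i) exactly as you do, via H\"older's inequality at the pre-limit level, but it then proves the chord inequalities behind (ii) separately, applying Jensen's inequality directly to the moments $\E|X(t)|^{q}$ in each of the three sign configurations ($0<q_1<q_2$; $q_1<q_2<0$; $q_1<0<q_2$) and only afterwards passing to the limit; parts (iii) and (iv) are then read off from those pre-limit chord inequalities. You instead pass to the limit once and derive (ii)--(iv) purely from convexity of $\tau$ together with $\tau(0)=0$, via monotonicity of chord slopes through the origin --- your three sign cases correspond exactly to the paper's three Jensen cases (indeed Jensen here is just H\"older with one exponent placed at $q=0$, so the two computations are the same inequality in different clothing). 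What your route buys is economy and generality: (ii)--(iv) become statements about an arbitrary extended-real-valued convex function vanishing at the origin, with no further reference to the process $X$. The only points needing care, which you correctly flag, are that $\tau(0)=0$ (immediate since $\E|X(t)|^{0}=1$, and noted in the paper right after the definition of $\tau$) and that the chord inequalities are invoked only at points of $\mathcal{D}_{\tau}$, matching the restriction in the statement; with those in place, your bookkeeping in (ii), including the convex combination $0=\frac{-q_1}{q_2-q_1}q_2+\frac{q_2}{q_2-q_1}q_1$ for $q_1<0<q_2$, and your sup-to-inf conversion in (iv) yielding \eqref{e:sfnegqbound} with the special case $q'=-q$, are all correct and match the paper's conclusions.
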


\begin{proof}
\textit{(i)} Take $q_1, q_2 \in \R$ and $w_1, w_2 \geq 0$ such that $w_1+w_2=1$. By using H\"older's inequality we get
\begin{equation*}
\E|X(t)|^{w_1 q_1 + w_2 q_2} \leq \left( \E|X(t)|^{w_1 q_1 \frac{1}{w_1}} \right)^{w_1} \left( \E|X(t)|^{w_2 q_2 \frac{1}{w_2}} \right)^{w_2} =\left( \E|X(t)|^{q_1} \right)^{w_1} \left( \E|X(t)|^{q_2} \right)^{w_2}.
\end{equation*}
Taking logarithms, dividing by $\log t$ ($t>1$) and letting $t\to \infty$ yields $\tau(w_1 q_1 + w_2 q_2) \leq w_1 \tau(q_1) + w_2 \tau(q_2)$.

\textit{(ii)} For $q_1, q_2 \in \mathcal{D}_{\tau}$, $0< q_1 < q_2$, Jensen's inequality implies $\E|X(t)|^{q_1} = \E\left( |X(t)|^{q_2} \right)^{\frac{q_1}{q_2}} \leq  \left( \E |X(t)|^{q_2} \right)^{\frac{q_1}{q_2}}$ and hence $\frac{\E|X(t)|^{q_1}}{\log t}  \leq \frac{q_1}{q_2} \frac{\log \E |X(t)|^{q_2}}{\log t}$, which gives
\begin{equation}\label{tauprop1}
\tau(q_1) \leq \frac{q_1}{q_2} \tau(q_2) \iff \frac{\tau(q_1)}{q_1} \leq \frac{\tau(q_2)}{q_2}.
\end{equation}
If $q_1, q_2 \in \mathcal{D}_{\tau}$, $q_1 < q_2<0$, then we similarly obtain $\E|X(t)|^{q_2} = \E\left( |X(t)|^{q_1} \right)^{\frac{q_2}{q_1}} \leq  \left( \E |X(t)|^{q_1} \right)^{\frac{q_2}{q_1}}$, and $\tau(q_2) \leq \frac{q_2}{q_1} \tau(q_1) \iff \frac{\tau(q_1)}{q_1} \leq \frac{\tau(q_2)}{q_2}$. If $q_1, q_2 \in \mathcal{D}_{\tau}$, $q_1 < 0 < q_2$, then $\E|X(t)|^{q_1} = \E\left( |X(t)|^{q_2} \right)^{\frac{q_1}{q_2}} \geq  \left( \E |X(t)|^{q_2} \right)^{\frac{q_1}{q_2}}$, and
\begin{equation}\label{tauprop2}
\tau(q_1) \geq \frac{q_1}{q_2} \tau(q_2) \iff \frac{\tau(q_1)}{q_1} \leq \frac{\tau(q_2)}{q_2}.
\end{equation}

\textit{(iii)} If $\tau(q')\geq 0$, then taking $q_1=q'$ and $q_2=q$ in \eqref{tauprop1}, we have $\tau(q)\geq 0$. Now for arbitrary $q'<q_1<q_2$, \eqref{tauprop1} implies that $\tau(q_1) \leq \tau(q_2)$.

\textit{(iv)} This follows by taking $q_1=q$ and $q_2=q'$ in \eqref{tauprop2} and minimizing the right-hand side. That $\tau(q)\geq -\tau(-q)$ follows from \eqref{tauprop2} by putting $q_1=q$ and $q_2=-q$.
\end{proof}

In \cite{GLST2016JSP,GLST2019Bernoulli}, intermittency is defined by using the scaling function as follows.

\begin{defi}\label{def:int}
A stochastic process $\{X(t), t \geq 0\}$ with the scaling function $\tau$ is \emph{intermittent} if there exist $p, r \in \mathcal{D}_\tau$, $p<r$ such that
\begin{equation}\label{e:inter}
    \frac{\tau(p)}{p} < \frac{\tau(r)}{r}.
\end{equation}
\end{defi}

Since $q \mapsto \tau(q)/q$ is always non-decreasing, intermittency refers to a situation when there are points of strict increase in this mapping. In particular, $L^q$ norms of the process may grow at different rates for different $q$.

Suppose now that $\{X(t), \, t \geq 0\}$ is a \textit{self-similar process} with self-similarity parameter $H$, that is, the finite-dimensional distributions of $\{X(ct)\}$ are the same as those of $\{c^H X(t)\}$. The scaling function of $X$ is then $\tau(q)=Hq$ for $q \in \mathcal{D}_\tau$ and therefore a self-similar process cannot be intermittent.

To see how intermittency is related to limit theorems, note that by Lamperti's theorem \cite[Thm.~2.8.5]{pipiras2017long} every process satisfying limit theorem is asymptotically self-similar. More precisely, let $X=\{X(t), \, t \geq 0\}$ and $Z=\{Z(t), \, t \geq 0\}$ be two processes such that $Z(t)$ is nondegenerate for every $t>0$ and suppose that for a sequence $\{a_T\}$, $a_T>0$, $\lim_{T\to \infty} a_T = \infty$, one has
\begin{equation}
    \left\{ \frac{X(Tt)}{a_T} \right\} \overset{fdd}{\to} \left\{ Z(t) \right\}, \label{limittheorem}
\end{equation}
with convergence in the sense of convergence of all finite dimensional distributions as $T \to \infty$. By Lamperti's theorem, $Z$ is $H$-self-similar for some $H>0$. If in \eqref{limittheorem} there is also convergence of moments, then the scaling function of $X$ would be the same as the scaling function of the limit $Z$ \cite[Thm.~1]{GLST2019Bernoulli}. Hence, $\tau(q)=Hq$ for every $q$ such that
\begin{equation}
    \frac{\E| X(Tt)|^q}{a_T^q} \to \E |Z(t)|^q, \quad \forall t \geq 0. \label{thm:limitmom}
\end{equation}
For intermittent processes satisfying a limit theorem in the sense of \eqref{limittheorem},  convergence of moments as in \eqref{thm:limitmom} must fail to hold for some range of $q$. The next proposition shows that convergence of moments in \eqref{limittheorem} typically holds for moments of order $q$ in some neighborhood of the origin. In this range of $q$ the scaling function of $X$ must then be of the form $\tau(q)=Hq$.
%\marginnote{\textcolor{red}{One question is whether $\tau$ must always be linear in some neighborhood of the origin or, equivalently, whether \eqref {limittheorem} implies that \eqref{thm:limitmom} holds at least for small $q$? In general no, consider a sequence $2^n \1_{[0,1/n]}$ which converges to zero but all positive order moments diverge.}}

\begin{prop}
Suppose now that \eqref{limittheorem} holds for some nondegenerate processes $X$ and $Z$ and let $\mathcal{Q}$ be a set of $q \in \R$ for which \eqref{thm:limitmom} holds.
\begin{enumerate}[(i)]
\item If $0<r<s$, $\E |Z(1)|^s< \infty$ and $s\in \mathcal{Q}$, then $r \in \mathcal{Q}$.
\item If $s<r<0$, $\E |Z(1)|^s< \infty$ and $s\in \mathcal{Q}$, then $r \in \mathcal{Q}$.
\end{enumerate}
\end{prop}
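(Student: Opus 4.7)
The plan is to invoke the classical principle that convergence in distribution together with uniform integrability yields convergence of the corresponding absolute moments. Fix $t>0$ and set $Y_T := X(Tt)/a_T$. From \eqref{limittheorem}, $Y_T \overset{d}{\to} Z(t)$.

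The first step is to pass to $|\cdot|^r$ via the continuous mapping theorem. For $r>0$ (case (i)) the map $x\mapsto |x|^r$ is continuous, so $|Y_T|^r \overset{d}{\to} |Z(t)|^r$ is immediate. For $s<r<0$ (case (ii)), the map is discontinuous only at the origin, so I need to verify $\P(Z(t)=0)=0$. This is forced by $\E|Z(1)|^s<\infty$ with $s<0$: a positive atom of $Z(1)$ at zero would make this moment infinite. Since $Z$ is $H$-self-similar by Lamperti's theorem, $Z(t)\overset{d}{=} t^H Z(1)$ and the same conclusion holds for every $t>0$.

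The second, decisive step is uniform integrability of $\{|Y_T|^r\}_T$. Since $s\in\mathcal{Q}$, the sequence $\E|Y_T|^s$ converges to $\E|Z(t)|^s<\infty$ and hence is bounded in $T$. In both (i) and (ii) the ratio $s/r$ is strictly greater than $1$ (positive numbers with $s>r$ in (i); negative numbers with $|s|>|r|$ in (ii)). I would then establish the uniform bound
\begin{equation*}
|Y_T|^r \mathbf{1}_{\{|Y_T|^r > K\}} \;\leq\; K^{1-s/r}\, |Y_T|^s,
\end{equation*}
which, on the event $\{|Y_T|^r > K\}$, reduces to $|Y_T|^{r-s}\leq K^{(r-s)/r}$; since $r-s$ and $r$ have the same sign in both cases, this last inequality is precisely what the defining event yields. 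Taking expectations and letting $K\to\infty$ (using $1-s/r<0$) gives uniform integrability of $\{|Y_T|^r\}_T$. Combined with convergence in distribution of $|Y_T|^r$, this delivers $\E|Y_T|^r\to\E|Z(t)|^r$, i.e.\ $r\in\mathcal{Q}$.

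The step I expect to be most delicate is case (ii): the moment $\E|Y_T|^r$ is sensitive to the behavior of $|Y_T|$ near zero, so one must simultaneously use $\P(Z(t)=0)=0$ (for convergence in distribution of $|Y_T|^r$) and the uniform $L^s$-bound (for uniform integrability). The sign bookkeeping in the key inequality above is what ties these two ingredients together; once written in terms of the ratio $s/r$, a single argument covers both (i) and (ii).
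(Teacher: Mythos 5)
Your proof is correct and takes essentially the same route as the paper's: from $s\in\mathcal{Q}$ the moments $\E\left|X(Tt)/a_T\right|^s$ converge and are hence bounded, this yields uniform integrability of $\left\{\left|X(Tt)/a_T\right|^r\right\}$, and combined with convergence in distribution of $\left|X(Tt)/a_T\right|^r$ this gives convergence of the $r$-th moments. The only differences are minor and in your favor: you verify uniform integrability directly via the truncation inequality (whose sign bookkeeping in both cases checks out) where the paper uses Jensen's inequality with an $\varepsilon$-margin and a boundedness-implies-UI criterion, and you make explicit the point the paper dismisses with ``\textit{(ii)} follows similarly,'' namely that $\E|Z(1)|^s<\infty$ with $s<0$ forces $P(Z(t)=0)=0$ (transferred to all $t>0$ by self-similarity), so that the continuous mapping theorem applies to $x\mapsto|x|^r$ for negative $r$.
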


\begin{proof}
\textit{(i)} By \cite[Thm.~5.4]{billingsley1968} we have that  $\left\{ \left| X(Tt)/a_T\right|^s \right\}$ is uniformly integrable, hence $\sup_n \E \left| X(Tt)/a_T\right|^s < \infty$. For $\varepsilon>0$ such that $0<r<r+\varepsilon<s$, we have by Jensen's inequality
\begin{equation*}%\label{e:Jensen:posq}
\E|X(t)|^{r+\varepsilon} = \E\left( |X(t)|^{s} \right)^{\frac{r+\varepsilon}{s}} \leq  \left( \E |X(t)|^{s} \right)^{\frac{r+\varepsilon}{s}}.
\end{equation*}
It follows that $\sup_n \E \left| X(Tt)/a_T\right|^{r+\varepsilon} < \infty$ and thus $\{\left| X(Tt)/a_T\right|^r \}$ is uniformly integrable and $r \in \mathcal{Q}$. \textit{(ii)} follows similarly. %The proof is the same as the proof of \textit{(i)} except that we use \eqref{e:Jensen:negq} instead of \eqref{e:Jensen:posq}.
\end{proof}

\section{Rate of growth of the process}\label{sec3}

We shall investigate the \textit{rate of growth} of the process $\{X(t)\}$ by considering
\begin{equation}\label{e:RY}
R_X(t) = \frac{\log |X(t)|}{\log t},
\end{equation}
as $t\to \infty$. We implicitly assume $X(t)$ does not have a probability point mass at zero, hence $\log |X(t)|<\infty$ a.s. Note that this definition of the rate of growth is tailored at processes that grow roughly as a power function of time. If one would be interested in processes growing exponentially in time (e.g.~solutions of some SPDEs), then $\log |X(t)|/t$ could be investigated. In the context of limit theorems, $\{X(t)\}$ will be a partial sum process or an integrated process.

\begin{prop}\label{prop:logYlimit}
Suppose that $\{X(t), \ t \geq 0\}$ satisfies \eqref{limittheorem} for some process $Z$ and a sequence of constants $\{a_T\}$. Then for some $H>0$ and for every $t>0$
\begin{equation*}
    \frac{\log |X(Tt)|}{\log T} \overset{P}{\to} H, \quad \text{ as } T \to \infty.
\end{equation*}
\end{prop}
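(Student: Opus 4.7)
The approach is to separate the deterministic norming from the random fluctuation via the identity
\begin{equation*}
\frac{\log |X(Tt)|}{\log T} = \frac{\log a_T}{\log T} + \frac{\log |X(Tt)/a_T|}{\log T},
\end{equation*}
and to show that the first term converges to $H$ while the second converges to $0$ in probability. Slutsky's lemma then delivers the conclusion.

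First I would invoke Lamperti's theorem applied to \eqref{limittheorem}. Since $Z$ is nondegenerate, this produces an exponent $H>0$ such that $Z$ is $H$-self-similar and, crucially, yields that $\{a_T\}$ is regularly varying of index $H$ at infinity. Writing $a_T = T^H L(T)$ with $L$ slowly varying, the deterministic term becomes
\begin{equation*}
\frac{\log a_T}{\log T} = H + \frac{\log L(T)}{\log T} \to H,
\end{equation*}
using the standard fact that $\log L(T)/\log T \to 0$ for any slowly varying $L$, which itself follows from $T^{-\varepsilon} L(T) \to 0$ for every $\varepsilon>0$.

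For the random term I would rely on tightness. Fix $t>0$ and $\varepsilon>0$. The convergence in distribution $X(Tt)/a_T \overset{d}{\to} Z(t)$ makes the family $\{X(Tt)/a_T\}_T$ tight, so for $M$ large, $\sup_T P(|X(Tt)/a_T| > M) < \varepsilon/2$, and for $T$ large enough $\log M < \varepsilon \log T$, which handles the upper tail. For the lower tail, I would apply the portmanteau theorem on the closed set $\{|x|\le \delta\}$ to obtain, for continuity points $\delta$ of the law of $|Z(t)|$,
\begin{equation*}
\limsup_{T\to \infty} P\bigl(|X(Tt)/a_T| \le \delta\bigr) \le P(|Z(t)| \le \delta),
\end{equation*}
and then send $\delta \downarrow 0$ along continuity points, which drives the right-hand side down to $P(Z(t)=0)$.

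The main obstacle is therefore to rule out an atom of $Z(t)$ at the origin. By $H$-self-similarity one has $Z(t) \overset{d}{=} t^H Z(1)$, so the issue reduces to $P(Z(1)=0)=0$. This is automatic in every setting of interest in this paper (Brownian motion, fractional Brownian motion, stable L\'evy and stable dependent limits), since the limit $Z(1)$ then admits a density; for the general statement, one reads the proposition as carrying this mild implicit assumption. With that in hand, the two preceding convergences combine to give $\log |X(Tt)|/\log T \overset{P}{\to} H$ for every fixed $t>0$.
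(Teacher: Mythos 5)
Your proof is correct and follows essentially the same route as the paper: Lamperti's theorem gives $a_T = T^H L(T)$, the deterministic term $\log a_T/\log T \to H$, and the fluctuation term $\log |X(Tt)/a_T|/\log T$ is shown negligible --- the paper gets this in one stroke from the continuous mapping theorem ($\log|X(T)/a_T| \overset{d}{\to} \log|Z(1)|$, then divide by $\log T \to \infty$), while you rederive the same fact by hand via tightness for the upper tail and the portmanteau bound for the lower tail. Your explicit flagging of the hypothesis $P(Z(1)=0)=0$ is apt rather than a deviation: the paper's continuous-mapping step silently requires exactly this condition, since $x \mapsto \log|x|$ is discontinuous (and unbounded) at $0$, and the paper only acknowledges the analogous no-atom assumption for $X(t)$ in Section 3.
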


\begin{proof}
By Lamperti's theorem \cite[Thm.~2.8.5]{pipiras2017long}, $Z$ is $H$-self-similar and $a_T=T^H L(T)$ for some $H>0$ and $L$ slowly varying at infinity. By the continuous mapping theorem we have that
\begin{equation*}
    \log T \left( \frac{\log |X(T)|}{\log T} - \frac{\log a_T}{\log T} \right) \overset{d}{\to} \log |Z(1)|,
\end{equation*}
so that
\begin{equation*}
    \frac{\log |X(T)|}{\log T} - \frac{\log a_T}{\log T} \overset{P}{\to} 0,
\end{equation*}
which proves the statement since
\begin{equation*}
\lim_{T\to \infty} \frac{\log a_T}{\log T}= \lim_{T\to \infty} \log \frac{H \log T + \log L(T)}{\log T} = H.
\end{equation*}
\end{proof}

Proposition \ref{prop:logYlimit} shows that for processes satisfying the limit theorem in the classical sense, the rate of growth $\log |X(T)|/\log T$ converges in probability to the self-similarity parameter $H$ of the limiting process. Roughly speaking, this means that $X(T)$ is typically of the order $T^H$.

However, besides the dominant \textit{scale} $t^H$, the limit theorem itself does not tell us if $X(t)$ exhibits any other scales which may be of the larger order but with probability decaying to zero. These scales may be identified by investigating the large deviations of the rate of growth. For this we use G\"artner-Ellis theorem which we recall here in a slightly more general version than \cite[Thm.~2.3.6]{dembo1998large} allowing for general speed $s_t$ and uncountable family of measure (see Remark (a) on p.~44 of \cite{dembo1998large}; see also \cite{ellis1984large}).

First we recall some related notions. For the function $f$ on the real line we denote by $f^*$ its \textit{Legendre(-Fenchel) transform}:
\begin{equation}\label{e:LFtransform}
f^* (x) = \sup_{q \in \R} \left\{ q x - f(q) \right\}
\end{equation}
and by $\mathcal{D}_{f}$ the set
\begin{equation*}
\mathcal{D}_{f}=\left\{ q \in \R : f(q)<\infty \right\}.
\end{equation*}
The point $x\in \R$ is an \textit{exposed point} of $f^*$ if for some $\lambda \in \R$ and all $y \neq x$
\begin{equation*}
f^*(y) - f^*(x) > \lambda (y-x).
\end{equation*}
The real number $\lambda$ is called an \textit{exposing hyperplane}.

\begin{theorem}\label{thm:GE}
Suppose $\{R(t), \, t \geq 0\}$ is a family of random variables with $R(t)$ having distribution $\mu_t$, $\{s_t\}$ is a sequence of positive numbers, $s_t \to \infty$, and define
\begin{equation}\label{e:lambdan}
\Lambda_t (q) = \log \E \left[ e^{q R(t)}\right].
\end{equation}
Assume that for each fixed $q \in \R$ the limit
\begin{equation}\label{e:lambda}
\Lambda(q) = \lim_{t\to \infty} \frac{1}{s_t} \Lambda_t( s_t q)
\end{equation}
exists as an extended real number and assume that zero belongs to the interior of the set $\mathcal{D}_\Lambda = \left\{ q \in \R : \Lambda(q)<\infty \right\}$. Then:
\begin{enumerate}[(i)]
\item For any closed set $C$, the following upper large deviation bound holds:
\begin{equation*}
\limsup_{t\to \infty} \frac{1}{s_t} \log \mu_t(C) \leq - \inf_{x \in C} \Lambda^* (x).
\end{equation*}
\item For any open set $O$, it holds that
\begin{equation}\label{e:lowerboundGEthm}
\liminf_{t\to \infty} \frac{1}{s_t} \log \mu_t(O) \geq - \inf_{x \in O \cap E } \Lambda^* (x),
\end{equation}
where $E$ is the set of exposed points of $\Lambda^*$ whose exposing hyperplanes belong to $\Int (\mathcal{D}_{\Lambda})$.
\end{enumerate}
\end{theorem}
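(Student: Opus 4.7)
The plan is to follow the standard Gärtner–Ellis proof (as in Chapter 2.3 of Dembo–Zeitouni) and observe that the two extensions here — an arbitrary speed $s_t \to \infty$ in place of $s_t = t$, and an uncountable index $t$ in place of a sequence — require only notational adjustments, since everything reduces to limits of real-valued quantities indexed by $t$.

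For the upper bound, I would start from the exponential Chebyshev inequality: for any $q \geq 0$ and $x \in \R$,
\[
\mu_t\bigl([x,\infty)\bigr) \leq e^{-s_t q x}\,\E\bigl[e^{s_t q R(t)}\bigr] = \exp\bigl(-s_t q x + \Lambda_t(s_t q)\bigr).
\]
Dividing by $s_t$, taking logarithms, and passing to the limit via \eqref{e:lambda} yields
\[
\limsup_{t\to\infty} \frac{1}{s_t}\log \mu_t\bigl([x,\infty)\bigr) \leq -\bigl(qx - \Lambda(q)\bigr),
\]
and optimizing over $q \geq 0$ gives the bound $-\Lambda^*(x)$ whenever $x$ exceeds the right derivative of $\Lambda$ at $0$; a symmetric argument treats $(-\infty,x]$. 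The hypothesis $0 \in \Int(\mathcal{D}_\Lambda)$ supplies exponential tightness: choosing $q_0 > 0$ and $q_1 < 0$ with both $\Lambda(q_0), \Lambda(q_1)$ finite, the half-line estimates show that the mass outside a sufficiently large compact interval is exponentially negligible. A general closed set $C$ is then decomposed into a finite cover of $C \cap [-M,M]$ by short intervals (on each of which the half-line bounds above and below localize the estimate) together with the exponentially small complement, yielding $\limsup \frac{1}{s_t}\log \mu_t(C) \leq -\inf_{x\in C}\Lambda^*(x)$.

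For the lower bound, by reducing an open set $O$ to an arbitrarily small neighborhood of any exposed point $x \in O$ with exposing hyperplane $\lambda \in \Int(\mathcal{D}_\Lambda)$, it suffices to prove that for every such $x$ and every $\delta > 0$,
\[
\liminf_{t\to\infty} \frac{1}{s_t}\log \mu_t\bigl((x-\delta, x+\delta)\bigr) \geq -\Lambda^*(x).
\]
The key device is the Esscher tilt at scale $s_t \lambda$: set
\[
\widetilde{\mu}_t(A) = e^{-\Lambda_t(s_t\lambda)} \int_A e^{s_t\lambda y}\, d\mu_t(y),
\]
so that
\[
\mu_t\bigl((x-\delta,x+\delta)\bigr) \geq \exp\bigl(\Lambda_t(s_t\lambda) - s_t\lambda x - s_t|\lambda|\delta\bigr)\,\widetilde{\mu}_t\bigl((x-\delta,x+\delta)\bigr).
\]
The limiting log-MGF of $\widetilde{\mu}_t$ is $q \mapsto \Lambda(q+\lambda) - \Lambda(\lambda)$, finite in a neighborhood of $0$ precisely because $\lambda \in \Int(\mathcal{D}_\Lambda)$. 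Applying the already-proved upper bound to $\widetilde{\mu}_t$, the associated rate function is $y \mapsto \Lambda^*(y) - \lambda y + \Lambda(\lambda)$, which by the exposed-hyperplane property is strictly positive for all $y \neq x$ and vanishes at $x$. This forces $\widetilde{\mu}_t((x-\delta,x+\delta)^c) \to 0$ at exponential rate, so $\widetilde{\mu}_t((x-\delta,x+\delta)) \to 1$. Dividing by $s_t$, taking $\log$, and using $\frac{1}{s_t}\Lambda_t(s_t\lambda) \to \Lambda(\lambda)$ yields a lower bound of $\Lambda(\lambda) - \lambda x - |\lambda|\delta$; letting $\delta \downarrow 0$ produces $-\Lambda^*(x)$ by the identity $\Lambda^*(x) = \lambda x - \Lambda(\lambda)$ at an exposed point.

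The main obstacle I expect is the concentration step in the lower bound, namely verifying that the tilted measures $\widetilde{\mu}_t$ fall under the scope of the upper bound uniformly in $t$. This is exactly where \emph{interior} membership $\lambda \in \Int(\mathcal{D}_\Lambda)$ matters, not just $\lambda \in \mathcal{D}_\Lambda$: it ensures that $0$ is interior to the effective domain of the shifted limit $\Lambda(\cdot + \lambda) - \Lambda(\lambda)$, so that exponential tightness of $\widetilde{\mu}_t$ is inherited and the upper bound applies. The remaining technical points — handling the uncountable family and the general speed $s_t$ — require no new ideas beyond replacing $n$ by $t$ throughout the Dembo–Zeitouni argument.
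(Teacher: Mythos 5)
The paper does not actually prove this theorem: it is quoted as a known, slightly generalized form of the G\"artner--Ellis theorem, with the paper pointing to \cite[Thm.~2.3.6 and Remark~(a), p.~44]{dembo1998large} and to \cite{ellis1984large} for precisely the two extensions (general speed $s_t$ and continuous parameter $t$) that you address. Your reconstruction is exactly the standard Dembo--Zeitouni argument --- exponential Chebyshev plus exponential tightness from $0 \in \Int(\mathcal{D}_\Lambda)$ for the upper bound, and the Esscher tilt at scale $s_t\lambda$ with the exposed-point property and the inherited upper bound for the tilted family for the lower bound --- and it is correct as written (note only that the final step does not even require the equality $\Lambda^*(x)=\lambda x-\Lambda(\lambda)$, since the defining inequality $\Lambda^*(x)\geq \lambda x-\Lambda(\lambda)$ already gives $\Lambda(\lambda)-\lambda x\geq -\Lambda^*(x)$), so your proposal matches the proof the paper's citation invokes.
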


If $O\cap E$ in \eqref{e:lowerboundGEthm} may be replaced with $O$, then one says the large deviation principle holds for $\{\mu_t\}$ with \textit{speed} $\{s_t\}$ and (good) rate function $\Lambda^*$. This happens if $\Lambda$ is essentially smooth and a lower semicontinuous function (see \cite{dembo1998large} for details).

We now return to the rate of growth \eqref{e:RY} of some process $\{X(t)\}$. As we will see from the examples below, apart from the dominant rate of growth,  intermittent processes may also exhibit other rates of growth and the probability of observing these rates decays as a power function of $t$ as $t\to \infty$. For this reason we choose $s_t=\log t$ in the G\"artner-Ellis theorem. We will then be able to identify large deviation probabilities that are power function of $t$. For \eqref{e:lambdan} we get
\begin{equation*}
\Lambda_t(q) = \log \E \left[\exp \left\{ q \frac{\log |X(t)|}{\log t}\right\} \right]
\end{equation*}
and \eqref{e:lambda} equals
\begin{equation*}
\Lambda(q) = \lim_{t\to \infty}  \frac{1}{\log t} \log \E \left[\exp \left\{q \log |X(t)| \right\} \right] = \lim_{t\to \infty}  \frac{1}{\log t} \log \E  |X(t)|^q = \tau(q),
\end{equation*}
provided the limit exists as an extended real number. Hence, \textit{the scaling function \eqref{deftau} plays the role of the function $\Lambda$ in the G\"artner-Ellis theorem for the rate of growth }and $\Lambda^*=\tau^*$ is the Legendre transform of the scaling function. From Thm.~\ref{thm:GE} we get the following.

\begin{theorem}\label{thm:mainGEapp}
Suppose $\{X(t), \ t \geq 0\}$ is a process with the scaling function $\tau$ such that for any $q\in \R$ the limit in \eqref{deftau} exists as an extended real number and $0 \in \Int (\mathcal{D}_\tau)$. Then:
\begin{enumerate}[(i)]
\item For any closed set $C$,
\begin{equation*}
\limsup_{t\to \infty} \frac{1}{\log t} \log P \left( \frac{\log |X(t)|}{\log t} \in C \right) \leq - \inf_{x \in C} \tau^* (x).
\end{equation*}
\item For any open set $O$,
\begin{equation*}%\label{e:lowerboundGEthm}
\liminf_{t\to \infty} \frac{1}{\log t} \log P \left( \frac{\log |X(t)|}{\log t} \in O \right) \geq - \inf_{x \in O \cap E } \tau^* (x),
\end{equation*}
where $E$ is the set of exposed points of $\tau^*$ whose exposing hyperplane belongs to $\Int (\mathcal{D}_{\tau})$.
\end{enumerate}
\end{theorem}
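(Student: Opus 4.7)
The plan is to specialize the G\"artner-Ellis theorem (Theorem~\ref{thm:GE}) to the family $R(t) = R_X(t) = \log|X(t)|/\log t$ with speed $s_t = \log t$; the calculation displayed just above the theorem statement essentially identifies the limiting log-moment generating function $\Lambda$ with the scaling function $\tau$, and the rest is bookkeeping.

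First I would let $\mu_t$ denote the distribution of $R(t)$, which is well-defined since $X(t)$ has no atom at zero, so $\log |X(t)|$ is a.s.\ finite. The speed $s_t = \log t$ clearly satisfies $s_t \to \infty$. Next I would verify the hypothesis of Theorem~\ref{thm:GE} by the computation
\begin{equation*}
\frac{1}{s_t}\Lambda_t(s_t q) \;=\; \frac{1}{\log t}\log \E\!\left[\exp\!\left(s_t\, q\, R(t)\right)\right] \;=\; \frac{1}{\log t}\log \E\,|X(t)|^q \;\longrightarrow\; \tau(q),
\end{equation*}
as $t\to\infty$, valid for every $q\in\R$ by the assumed existence of the limit \eqref{deftau} as an extended real number. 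Hence the function $\Lambda$ in \eqref{e:lambda} coincides with $\tau$, so $\mathcal{D}_\Lambda = \mathcal{D}_\tau$, and the hypothesis $0\in\Int(\mathcal{D}_\tau)$ is exactly $0\in\Int(\mathcal{D}_\Lambda)$ as required.

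Applying parts (i) and (ii) of Theorem~\ref{thm:GE} with rate function $\Lambda^* = \tau^*$, and rewriting $\mu_t(A) = P(R(t)\in A) = P(\log|X(t)|/\log t \in A)$, yields the two stated bounds directly; the set $E$ of exposed points of $\tau^*$ whose exposing hyperplanes lie in $\Int(\mathcal{D}_\tau)$ is inherited unchanged from $\Lambda^*$.

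There is no real obstacle here beyond the conceptual observation underlying the choice of speed: scaling the argument by $s_t = \log t$ converts the moment growth exponent $\tau(q)$ into the G\"artner-Ellis log-moment generating function limit, which is precisely why power-law large deviation probabilities for $|X(t)|$ become exponential large deviation probabilities (in $\log t$) for $R_X(t)$. The only mild care required is confirming that $R(t)$ is a genuine real-valued random variable, which is handled by the standing assumption that $X(t)$ has no point mass at zero.
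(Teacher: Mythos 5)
Your proposal is correct and follows essentially the same route as the paper: the paper's own argument is precisely the specialization of Theorem~\ref{thm:GE} to $R(t)=\log|X(t)|/\log t$ with speed $s_t=\log t$, using the computation $\frac{1}{\log t}\Lambda_t(q\log t)=\frac{1}{\log t}\log\E|X(t)|^q\to\tau(q)$ to identify $\Lambda=\tau$ and hence $\Lambda^*=\tau^*$. Your additional remarks on the well-definedness of $R(t)$ and the matching of $\mathcal{D}_\Lambda$ with $\mathcal{D}_\tau$ are accurate bookkeeping consistent with the paper's standing assumptions.
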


For the process $X=\{X(t), \ t \geq 0\}$, let
\begin{equation}\label{e:qunderover}
\begin{aligned}
\overline{q}(X) &= \sup \{ q >0 : \E|X(t)|^q < \infty  \ \forall t\},\\
\underline{q}(X) &= \inf \{ q <0 : \E|X(t)|^q < \infty  \ \forall t\}.
\end{aligned}
\end{equation}
To apply Thm.~\ref{thm:mainGEapp}, zero must be in the interior of $\mathcal{D}_\tau=\{q \in \R : \tau(q) < \infty\}$. A necessary condition for this to hold is that $\overline{q}(X)>0$ and $\underline{q}(X)<0$. If the limit in \eqref{deftau} is finite, the moments are finite in the range $(\underline{q}(X),\overline{q}(X))$.

We may also state \textit{(i)} and \textit{(ii)} of Thm.~\ref{thm:mainGEapp} equivalently as
\begin{equation}\label{e:LDPsetA}
\begin{aligned}
- \inf_{x \in \Int(A) \cap E } \tau^* (x) &\leq \liminf_{t\to \infty} \frac{1}{\log t} \log P \left( \frac{\log |X(t)|}{\log t} \in A \right)\\
&\leq \limsup_{t\to \infty} \frac{1}{\log t} \log P \left( \frac{\log |X(t)|}{\log t} \in A \right) \leq - \inf_{x \in \cl (A)} \tau^* (x),
\end{aligned}
\end{equation}
where $\cl(A)$ denotes the closure of a Borel set $A\subset \R$. In Sec.~\ref{sec4} and Sec.~\ref{subsec:34} we will illustrate many applications of Thm.~\ref{thm:mainGEapp}, but the general principle is the following. Suppose that we are interested in the scale $t^s$ (rate $s$). In an ideal situation, putting $A=(s-\varepsilon,s+\varepsilon)$ in \eqref{e:LDPsetA} would enable describing the probability of observing the scale $t^s$ since \eqref{e:LDPsetA} roughly tells us that
\begin{equation*}
t^{- \inf_{x \in (s-\varepsilon,s+\varepsilon) \cap E } \tau^* (x)} \lesssim P \left( t^{s-\varepsilon} \leq |X(t)| \leq t^{s+\varepsilon} \right) \lesssim t^{- \inf_{x \in [s-\varepsilon,s+\varepsilon]} \tau^* (x)}.
\end{equation*}
Hence, in the plot of $\tau^*(x)$ for a range of $x$ values, we may interpret $x$ as the rates of growth and $\tau^*(x)$ as the rate of decay of the probability of observing these rate. We illustrate this reasoning in Fig.~\ref{fig:idea}.

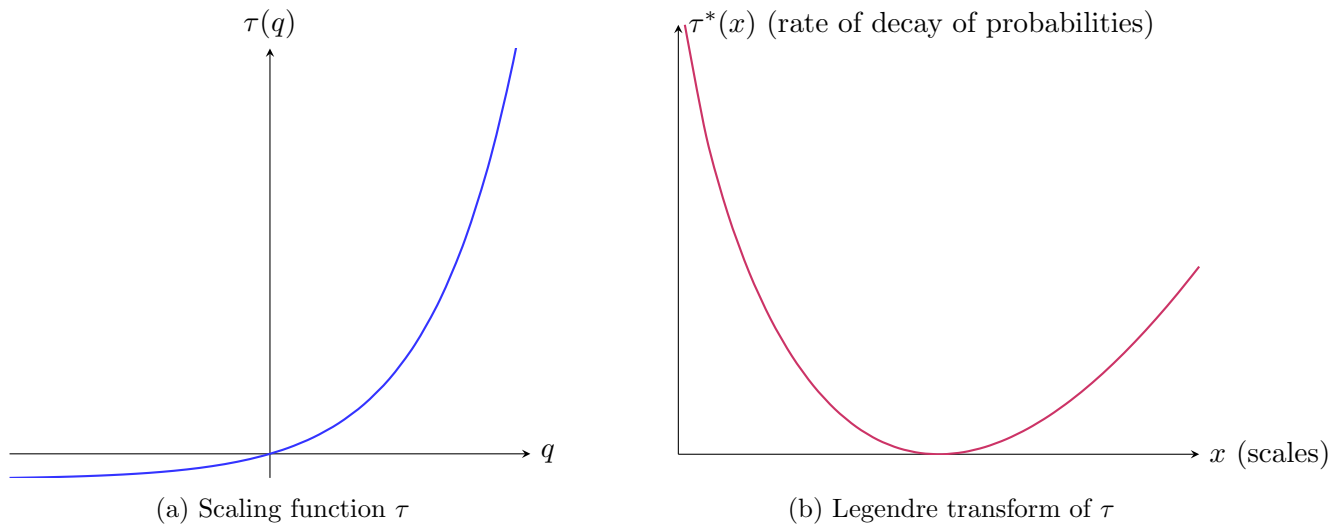
\begin{figure}[h!]
\centering
\begin{subfigure}[b]{0.45\textwidth}
\begin{tikzpicture}[domain=-2:2]
\begin{axis}[
axis lines=middle,
xlabel=$q$, xlabel style={at=(current axis.right of origin), anchor=west},
ylabel=$\tau(q)$, ylabel style={at=(current axis.above origin), anchor=south},
xtick={0},
xticklabels={$0$},
xmin=-3,
xmax=3,
ymajorticks=false
]
\addplot[smooth,thick,white!20!blue,domain=-3:4]{e^x-1} node [pos=0.5,left]{$Hq\ $};
\end{axis}
\end{tikzpicture}
\caption{Scaling function $\tau$}
\label{fig:idea2}
\end{subfigure}
\hfill
\begin{subfigure}[b]{0.45\textwidth}
\begin{tikzpicture}[domain=0:2]
\begin{axis}[
axis lines=middle,
xlabel=$x$ (scales), xlabel style={at=(current axis.right of origin), anchor=west},
ylabel=$\tau^*(x)$ (rate of decay of probabilities), ylabel style={at=(current axis.above origin), anchor=west},
xmin=0,
xmax=2,
xmajorticks=false,
ymajorticks=false
]
\addplot[smooth,thick,white!20!purple,domain=0.025:2]{1-x+x*ln(x)};
%\node (source) at (axis cs:-1,0.2){};
%\node (destination) at (axis cs:1,0.2){};
%\draw[->, thick](source) to [out=180,in=90] (destination);
\end{axis}
\end{tikzpicture}
\caption{Legendre transform of $\tau$}
\label{fig:idea2}
\end{subfigure}
\caption{From the scaling function to rate of growth}
\label{fig:idea}
\end{figure}

\section{Rate of growth in different scenarios}\label{sec4}

In this section we investigate the rate of growth of process by using the results of Sec.~\ref{sec3}. We consider different scenarios related to limit theorems.

\subsection{Limit theorems where all moments are finite and converge}\label{subsec:31}
Suppose that $X$ satisfies the limit theorem as in \eqref{limittheorem}, that all the moments are finite and converge, that is
\begin{equation}\label{e:mooconvsubsec41}
    \frac{\E| X(Tt)|^q}{a_T^q} \to \E |Z(t)|^q,
\end{equation}
for every $q\in \R$ and $t\geq 0$. The scaling function \eqref{deftau} of $X$ is \cite[Thm.~1]{GLST2019Bernoulli}
\begin{equation*}
    \tau(q)= H q, \quad q \in \R.
\end{equation*}
See Fig.~\ref{fig:subsec:31tau}. In this case, $X$ is not intermittent. The Legendre transform of $\tau$ is
\begin{equation}\label{e:ext}
\tau^*(x) = \sup_{q \in \R} \left\{ q x - \tau(q) \right\} = \sup_{q \in \R} \left\{ q (x - H) \right\}= \begin{cases}
0, & \text{ if } x=H,\\
\infty, & \text{ otherwise},
\end{cases}
\end{equation}
and the set of exposed points of $\tau^*$ is $E=\{H\}$ (see Fig.~\ref{fig:subsec:31tau*}).

\begin{figure}[h!]
\centering
\begin{subfigure}[b]{0.45\textwidth}
\begin{tikzpicture}[domain=-2:2]
\begin{axis}[
axis lines=middle,
xlabel=$q$, xlabel style={at=(current axis.right of origin), anchor=west},
ylabel=$\tau(q)$, ylabel style={at=(current axis.above origin), anchor=south},
xtick={0},
xticklabels={$0$},
xmin=-2,
xmax=4,
ymajorticks=false
]
\addplot[thick,white!20!blue,domain=-2:4]{0.625*x} node [pos=0.5,left]{$Hq\ $};
\end{axis}
\end{tikzpicture}
\caption{Scaling function $\tau$}
\label{fig:subsec:31tau}
\end{subfigure}
\hfill
\begin{subfigure}[b]{0.45\textwidth}
\begin{tikzpicture}[domain=0:2]
\begin{axis}[
axis lines=middle,
xlabel=$x$, xlabel style={at=(current axis.right of origin), anchor=west},
ylabel=$\tau^*(x)$, ylabel style={at=(current axis.above origin), anchor=south},
xtick={0,0.625},
xticklabels={0,$H$},
xmin=-0.5,
xmax=1.4,
ymajorticks=false,
%ytick={0},
%yticklabels={$0$},
%extra y ticks={0},
extra x ticks={0},
ymin=-0.02,
ymax=1
]
\draw[black!50!green,fill=black!50!green,fill opacity=0.2] (axis cs:0.625,0) circle (.65ex);
\addplot[thick,white!20!purple,domain=0.63:1.35]{0.9}; % node [pos=0.5,above]{$+\infty$};
\addplot[thick,white!20!purple,domain=-0.5:0.62]{0.9} node [pos=0.3,above]{$+\infty$};
\end{axis}
\end{tikzpicture}
\caption{Legendre transform of $\tau$}
\label{fig:subsec:31tau*}
\end{subfigure}
\caption{Scenario of Subsec.~\ref{subsec:31}}
\label{fig:subsec:31}
\end{figure}

By Prop.~\ref{prop:logYlimit} we have that for any $\varepsilon>0$
\begin{equation}\label{e:H2}
P \left( t^{H-\varepsilon} < |X(t)| < t^{H+\varepsilon} \right) \to 1,
\end{equation}
as $t\to \infty$. If we apply Thm.~\ref{thm:mainGEapp} and put $A=(H-\varepsilon, H+\varepsilon)$ in \eqref{e:LDPsetA}, we get that for any $\varepsilon>0$
\begin{equation*}
\begin{aligned}
- \inf_{x \in \{H\} } \tau^* (x) &\leq \liminf_{t\to \infty} \frac{1}{\log t} \log P \left( H-\varepsilon < \frac{\log |X(t)|}{\log t} < H+\varepsilon \right)\\
&\leq \limsup_{t\to \infty} \frac{1}{\log t} \log P \left( H-\varepsilon < \frac{\log |X(t)|}{\log t} < H+\varepsilon \right) \leq - \inf_{x \in [H-\varepsilon, H+\varepsilon]} \tau^* (x).
\end{aligned}
\end{equation*}
By \eqref{e:ext}, the both extremes are equal to $\tau^*(0)=0$, and therefore, consistent with \eqref{e:H2},
\begin{equation*}%\label{e:H1}
\lim_{t\to \infty} \frac{1}{\log t} \log P \left( t^{H-\varepsilon} <  |X(t)| < t^{H+\varepsilon} \right) = 0.
\end{equation*}
For any other Borel set $A$ such that $H\notin A$ we have $\inf_{x \in \Int(A)\cap E} \tau^*(x)= \inf_{x \in \cl(A)} \tau^*(x) = \infty$, hence
\begin{equation*}
\lim_{t\to \infty} \frac{1}{\log t} \log P \left( \frac{\log |X(t)|}{\log t} \in A \right) = -\infty.
\end{equation*}
This implies that for any $m>0$, we can take $t$ large enough so that $P \left(\log |X(t)|/\log t \in A \right) \leq t^{-m}$. Because we are assuming $H\notin A$, the probability of observing any scale beside $t^H$ decays to zero faster than any negative power of $t$.

This scenario illustrates the fact that in the usual limit theorems when all the moments converge, there is a single rate of growth, or a single scale. Note, however, that the assumption \eqref{e:mooconvsubsec41} and the assumption that all the moments are finite is quite restrictive.

\subsection{Limit theorems with possibly infinite moments where finite moments converge}\label{subsec:32}

In the previous scenario we assumed that all the moments exist, both of positive and negative order. Note that for a random variable $Z$ with an absolutely continuous distribution we have $\E|Z|^{-1}=\infty$ as soon as the density is continuous and positive at zero (see e.g.~\cite{piegorsch1985existence}). This implies $\E|Z|^{q}=\infty$ for any $q\leq -1$. Hence, for usual distributions absolute moments of a negative order less than $-1$ are infinite. We now extend the setting of the previous scenario to include possible infinite moments, both of positive and negative order.

Suppose that for the process $X$ we have $\overline{q}=\overline{q}(X)>0$ and $\underline{q}=\underline{q}(X)<0$, with $\overline{q}(X)$ and $\underline{q}(X)$ defined in \eqref{e:qunderover}. Assume further that $X$ satisfies the limit theorem as in \eqref{limittheorem}, and that finite moments converge, i.e.~\eqref{e:mooconvsubsec41} holds for every $q\in (\underline{q}, \overline{q})$ and $t\geq 0$. Note that this scenario applies to any self-similar process $X$ as \eqref{limittheorem} and \eqref{e:mooconvsubsec41} then trivially hold with $Z=X$. The scaling function \eqref{deftau} of $X$ is \cite[Thm.~1]{GLST2019Bernoulli}
\begin{equation*}
    \tau(q)= H q, \quad q \in (\underline{q}, \overline{q}),
\end{equation*}
and $\tau(q)=\infty$ for $q<\underline{q}$ and $q>\overline{q}$. The Legendre transform \eqref{e:LFtransform} of $\tau$ may be computed as follows (see Fig.~\ref{fig:subsec:32}):
\begin{equation*}
\tau^*(x) = \max \left\{ \sup_{q \in (\underline{q}, \overline{q})} \left\{ q x - Hq \right\}, \ \sup_{q \notin (\underline{q}, \overline{q})} \left\{ q x - \tau(q) \right\} \right\}= \begin{cases}
\underline{q}(x-H), & \text{ if } x<H,\\
0, & \text{ if } x=H,\\
\overline{q}(x-H), & \text{ if } x>H.\\
\end{cases}
\end{equation*}
This implicitly includes the case when $\underline{q}=-\infty$ or $\overline{q}=\infty$ or both. The set of exposed points of $\tau^*$ is $E=\{H\}$.

\begin{figure}[h!]
\centering
\begin{subfigure}[b]{0.45\textwidth}
\begin{tikzpicture}[domain=-2:2]
\begin{axis}[
axis lines=middle,
xlabel=$q$, xlabel style={at=(current axis.right of origin), anchor=west},
ylabel=$\tau(q)$, ylabel style={at=(current axis.above origin), anchor=south},
xtick={-1,0,3},
xticklabels={$\underline{q}\ \ $, $0$, $\overline{q}$},
xmin=-2,
xmax=4,
ymajorticks=false,
ymin=-0.8,
ymax=3
]
\addplot[thick,white!20!blue,domain=-1:3]{0.625*x} node [pos=0.5,left]{$Hq\ $};
\addplot[dashed] coordinates {(-1,0) (-1,-0.625)};
\addplot[dashed] coordinates {(3,0) (3,1.875)};
\addplot[thick,white!20!blue,domain=-2:-1]{2.5} node [pos=0.3,above]{$+\infty$};
\addplot[thick,white!20!blue,domain=3:4]{2.5} node [pos=0.3,above]{$+\infty$};
\end{axis}
\end{tikzpicture}
\caption{Scaling function $\tau$}
\label{fig:subsec:32tau}
\end{subfigure}
\hfill
\begin{subfigure}[b]{0.45\textwidth}
\begin{tikzpicture}[domain=0:2]
\begin{axis}[
axis lines=middle,
xlabel=$x$, xlabel style={at=(current axis.right of origin), anchor=west},
ylabel=$\tau^*(x)$, ylabel style={at=(current axis.above origin), anchor=south},
xtick={0,0.625},
xticklabels={0,$H$},
xmin=-0.5,
xmax=1.4,
ymajorticks=false,
%ytick={0},
%yticklabels={$0$},
%extra y ticks={0},
extra x ticks={0},
ymin=-0.02,
ymax=1
]
\draw[black!50!green,fill=black!50!green,fill opacity=0.2] (axis cs:0.625,0) circle (.65ex);
\addplot[thick,white!20!purple,domain=-2:0.625]{-1*(x-0.625)} node [pos=0.8,left]{$\underline{q}(x-H)$};
\addplot[thick,white!20!purple,domain=0.625:4]{3*(x-0.625)} node [pos=0.05,right]{$\overline{q}(x-H)$};
\end{axis}
\end{tikzpicture}
\caption{Legendre transform of $\tau$}
\label{fig:subsec:32tau*}
\end{subfigure}
\caption{Scenario of Subsec.~\ref{subsec:32}}
\label{fig:subsec:32}
\end{figure}

If we take $A=(H+\varepsilon, \infty)$ and $A=(-\infty, H-\varepsilon)$, we respectively obtain the bounds
\begin{equation*}
\begin{aligned}
-\infty &\leq \liminf_{t\to \infty} \frac{1}{\log t} \log P \left( \frac{\log |X(t)|}{\log t} > H+\varepsilon \right)\leq \limsup_{t\to \infty} \frac{1}{\log t} \log P \left( \frac{\log |X(t)|}{\log t} > H+\varepsilon \right) \leq - \overline{q} \varepsilon,\\
-\infty &\leq \liminf_{t\to \infty} \frac{1}{\log t} \log P \left( \frac{\log |X(t)|}{\log t} < H-\varepsilon \right)\leq \limsup_{t\to \infty} \frac{1}{\log t} \log P \left( \frac{\log |X(t)|}{\log t} < H-\varepsilon \right) \leq \underline{q} \varepsilon,
\end{aligned}
\end{equation*}
which shows that for any $\delta>0$ we can take $t$ large enough so that
\begin{align}
P \left( |X(t)| > t^{H+\varepsilon} \right) \leq t^{- \overline{q} \varepsilon + \delta},\nonumber\\
P \left( |X(t)| < t^{H-\varepsilon} \right) \leq t^{\underline{q} \varepsilon+ \delta}.\label{e:sc2:bound2}
\end{align}
%We will write this shortly as
%\begin{equation*}
%\begin{aligned}
%P \left( |Y(n)| > n^{H+\varepsilon} \right) \lesssim n^{- \overline{q} \varepsilon},\\
%P \left( |Y(n)| < n^{H-\varepsilon} \right) \lesssim n^{\underline{q} \varepsilon}.
%\end{aligned}
%\end{equation*}
If $\overline{q}=\infty$, the first bound shows that $X$ does not exhibit rates greater than $H$ when power-law decaying probabilities are considered. If $\underline{q}=-\infty$, the second bound shows that $X$ does not exhibit rates less than $H$ and we have the situation considered in Subsec.~\ref{subsec:31}.

Consider for example the case where $\{X(t)\}$ is fractional Brownian motion. In this case, $\underline{q}=-1$ and $\overline{q}=\infty$. Since $\overline{q}=\infty$, we get that $P \left( |X(t)| > t^{H+\varepsilon} \right) \to 0$, faster than any negative power of $t$. But since $\underline{q}=-1$, for any $\delta>0$ we have $P \left( |X(t)| < t^{H-\varepsilon} \right)\leq t^{-\varepsilon+\delta}$ from \eqref{e:sc2:bound2} and we cannot conclude that the rates less than $H-\varepsilon$ are negligible on the power probability scale.

These bounds may not be sharp in general. However, as soon as positive order moments are finite and converge, we can conclude that $X(t)$ cannot grow faster than $t^{H+\varepsilon}$ with a probability decaying as some power of $t$.

\subsection{A simple biscale example}\label{subsec:33}

In this subsection we construct an example of a sequence that has two rates of growth. This closely resembles the behavior of the integrated supOU processes which will be considered in the next section. Suppose that $X(t)$, $t \in \N$, is an independent sequence given by
\begin{equation*}
X(t) = \begin{cases}
t^{H}, & \text{ with probability } 1-t^{-a},\\
t^b, & \text{ with probability } t^{-a},
\end{cases}
\end{equation*}
where $0<H<b$ and $a>0$. The scaling function for $q\in \R$ is given by
\begin{equation}\label{e:biscaletau}
\tau(q) = \lim_{t\to \infty} \frac{1}{\log t} \log \left( t^{H q} \left(1-t^{-a} \right) + t^{bq}t^{-a} \right)=\begin{cases}
H q, & \text{ if } q\leq \frac{a}{b-H},\\
bq-a, & \text{ if } q> \frac{a}{b-H},
\end{cases}
\end{equation}
and we have intermittency (see \eqref{e:inter}) since $q \mapsto \tau(q)/q$ is strictly increasing on $\left(a/(b-H), \infty\right)$. One can compute that
\begin{align*}
&\tau^*(x) = \max \left\{ \sup_{q \leq a/(b-H)} \left\{ q \left(x - H\right) \right\}, \ \sup_{q >a/(b-H)} \left\{ q \left(x - b\right) + a \right\} \right\}\\
&=\begin{cases}
\max \left\{ \infty, \ \frac{a}{b-H} x - \frac{aH}{b-H} \right\}, & \text{ if } x < H,\\
\max \left\{ \frac{a}{b-H} x - \frac{aH}{b-H}, \ \frac{a}{b-H} x - \frac{aH}{b-H} \right\}, & \text{ if } H \leq x \leq b,\\
\max \left\{ \frac{a}{b-H} x - \frac{aH}{b-H}, \ \infty \right\}, & \text{ if } x > b,
\end{cases}=\begin{cases}
\frac{a}{b-H} x - \frac{aH}{b-H}, & \text{ if } H \leq x \leq b,\\
\infty, & \text{ otherwise},
\end{cases}
\end{align*}
and the set of exposed points is $E=\{ H, b \}$ (see Fig.~\ref{fig:subsec:33}).

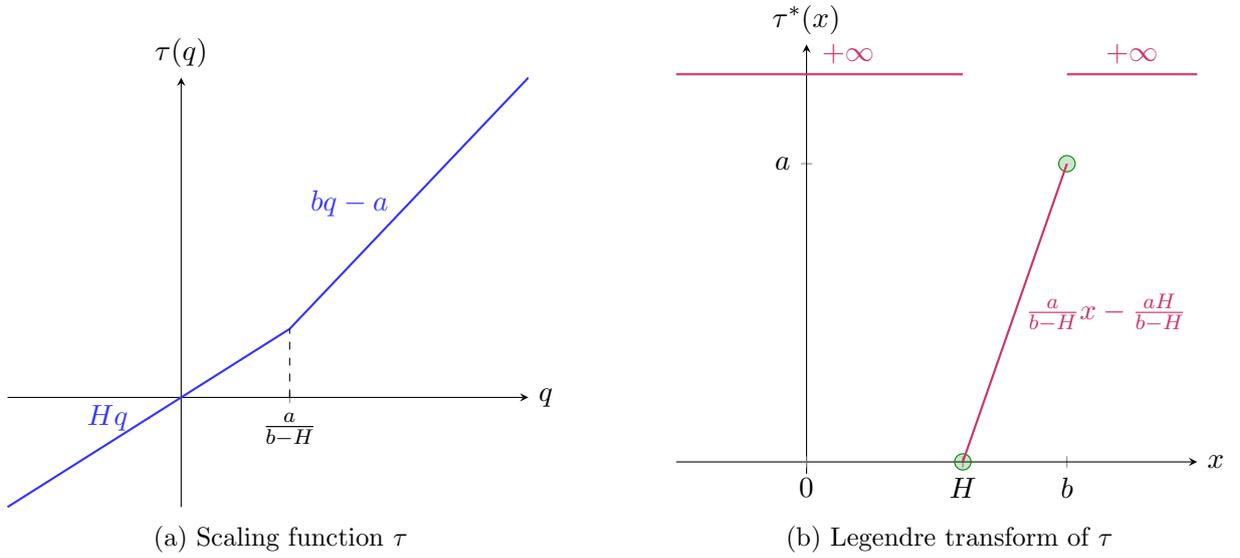
\begin{figure}[h!]
\centering
\begin{subfigure}[b]{0.45\textwidth}
\begin{tikzpicture}[domain=-2:2]
\begin{axis}[
axis lines=middle,
xlabel=$q$, xlabel style={at=(current axis.right of origin), anchor=west},
ylabel=$\tau(q)$, ylabel style={at=(current axis.above origin), anchor=south},
xtick={0,1.25},
xticklabels={$0$,$\frac{a}{b-H}$},
xmin=-2,
xmax=4,
ymajorticks=false
]
\addplot[thick,white!20!blue,domain=-2:1.25]{0.6*x} node [pos=0.5,left]{$Hq\ $};
\addplot[thick,white!20!blue,domain=1.25:4]{x-0.5} node [pos=0.5,left]{$bq-a\ $};
\addplot[dashed] coordinates {(1.25,0) (1.25,0.75)};
\end{axis}
\end{tikzpicture}
\caption{Scaling function $\tau$}
\label{fig:subsec:33tau}
\end{subfigure}
\hfill
\begin{subfigure}[b]{0.45\textwidth}
\begin{tikzpicture}[domain=0:2]
\begin{axis}[
axis lines=middle,
xlabel=$x$, xlabel style={at=(current axis.right of origin), anchor=west},
ylabel=$\tau^*(x)$, ylabel style={at=(current axis.above origin), anchor=south},
xtick={0.6,1},
xticklabels={$H$,$b$},
xmin=-0.5,
xmax=1.5,
ytick={0,0.5},
yticklabels={$0$,$a$},
%extra y ticks={0},
extra x ticks={0},
ymin=-0.02,
ymax=0.7
]
\draw[black!50!green,fill=black!50!green,fill opacity=0.2] (axis cs:0.6,0) circle (.65ex);
\draw[black!50!green,fill=black!50!green,fill opacity=0.2] (axis cs:1,0.5) circle (.65ex);
\addplot[thick,white!20!purple,domain=0.6:1]{1.25*x-0.75} node [pos=0.5,right]{$\frac{a}{b-H} x - \frac{aH}{b-H}$};
\addplot[thick,white!20!purple,domain=-0.5:0.6]{0.65} node [pos=0.6,above]{$+\infty$};
\addplot[thick,white!20!purple,domain=1:1.5]{0.65} node [pos=0.5,above]{$+\infty$};
\end{axis}
\end{tikzpicture}
\caption{Legendre transform of $\tau$}
\label{fig:subsec:33tau*}
\end{subfigure}
\caption{Example of Subsec.~\ref{subsec:33}}
\label{fig:subsec:33}
\end{figure}

If we take $A=\left(b- \varepsilon, b+ \varepsilon\right)$, then
\begin{equation*}
\begin{aligned}
- a = -\tau^*(b) &\leq \liminf_{t\to \infty} \frac{1}{\log t} \log P \left( t^{b-\varepsilon} <  |X(t)| < t^{b+\varepsilon} \right)\\
&\leq \limsup_{t\to \infty} \frac{1}{\log t} \log P \left( t^{b-\varepsilon} <  |X(t)| < t^{b+\varepsilon} \right) \leq -\tau^*(b-\varepsilon) = - \left(a - \frac{\varepsilon a}{b-H} \right).
\end{aligned}
\end{equation*}
This way we can conclude from the behavior of moments and the scaling function that two typical rates appear for $X$: one of the order $H$ and one of the order $b$. The first one is dominant, but the second is also relevant as it is not negligible when power law decaying probabilities are considered. On the other hand, if $A=(b+\varepsilon, \infty)$, then
\begin{equation*}
- \infty \leq \liminf_{t\to \infty} \frac{1}{\log t} \log P \left( |X(t)| > t^{b+\varepsilon} \right) \leq \limsup_{t\to \infty} \frac{1}{\log t} \log P \left( |X(t)| > t^{b+\varepsilon} \right) \leq - \infty,
\end{equation*}
showing that $P \left( |X(t)| > t^{b+\varepsilon} \right) \to 0$  faster than any negative power of $t$. For $A=(-\infty, H - \varepsilon)$
\begin{equation*}
- \infty \leq \liminf_{t\to \infty} \frac{1}{\log t} \log P \left( |X(t)| < t^{H-\varepsilon} \right) \leq \limsup_{t\to \infty} \frac{1}{\log t} \log P \left( |X(t)| < t^{H-\varepsilon} \right) \leq - \infty,
\end{equation*}
showing that $P \left( |X(t)| < t^{H-\varepsilon} \right) \to 0$ faster than any negative power of $t$. Hence, there are no rates greater than $b+\varepsilon$ and less than $H-\varepsilon$ for any $\varepsilon>0$, consistent with the definition of $X(t)$.

Let now $c$ and $\varepsilon>0$ be such that $H < c-\varepsilon < c+\varepsilon < b$. For $A=(c-\varepsilon, c+\varepsilon)$ we have
\begin{equation}\label{e:nomiddlerates}
\begin{aligned}
- \infty &\leq \liminf_{t\to \infty} \frac{1}{\log t} \log P \left( t^{c-\varepsilon} <  |X(t)| < t^{c+\varepsilon} \right)\\
&\leq \limsup_{t\to \infty} \frac{1}{\log t} \log P \left( t^{c-\varepsilon} <  |X(t)| < t^{c+\varepsilon} \right) \leq - \left( \frac{a}{b-H} (c-\varepsilon) - \frac{aH}{b-H} \right).
\end{aligned}
\end{equation}
The right-hand side is negative so that $P \left( t^{c-\varepsilon} <  |X(t)| < t^{c+\varepsilon} \right)\to 0$ as $t\to \infty$. By the definition of $X$ we know it does not exhibit scales $t^c$ for $H < c < b$ and $P \left( t^{c-\varepsilon} <  |X(t)| < t^{c+\varepsilon} \right)= 0$. One would expect to get $-\infty$ on the right-hand side of \eqref{e:nomiddlerates}, hence the bound obtained is not the best possible (see also the discussion in Sec.~\ref{sec6}).

\section{SupOU processes}\label{subsec:34}

The \textit{supOU} processes have been introduced in \cite{bn2001} as a strictly stationary process $Y=\{Y(t), \, t\in \R\}$ represented by the stochastic integral
\begin{equation*}%\label{supOU}
Y(t)= \int_{\R_+} \int_{\R} e^{-\xi t + s} \mathbf{1}_{[0,\infty)}(\xi t -s) \Lambda(d\xi,ds).
\end{equation*}
Here, $\Lambda$ is a homogeneous infinitely divisible random measure (\textit{L\'evy basis}) on $\R_+ \times \R$ such that $\log \E e^{i \zeta \Lambda(A) } =  \left( \pi \times Leb \right) (A) \kappa_{L}(\zeta)$, for $A \in \mathcal{B} \left(\R_+ \times \R\right)$, where $\pi$ is a probability measure on $\R_+$, $Leb$ denotes Lebesgue measure and $\kappa_{L}$ is the cumulant function $\kappa_{L} (\zeta)= \log \E e^{i \zeta L(1) }$ of some infinitely divisible random variable $L(1)$ with L\'evy-Khintchine triplet $(a,b,\mu)$, i.e.
\begin{equation*}%\label{kappacumfun}
\kappa_{L}(\zeta) = i\zeta a -\frac{\zeta ^{2}}{2} b  +\int_{\R}\left( e^{i\zeta x}-1-i\zeta x \mathbf{1}_{[-1,1]}(x)\right) \mu(dx).
\end{equation*}

To explain the definition, recall that the L\'evy driven \textit{Ornstein-Uhlenbeck type (OU) process} is a process $\{V(t), \, t\in \R\}$ defined by
\begin{equation*}%\label{OUonRdef1}
  V(t) = V^{(\lambda, L)}(t) = \int_{\R} e^{-\lambda t - s} \mathbf{1}_{[0,\infty)}(\lambda t-s) dL (s).
\end{equation*}
where $L$ is a two-sided L\'evy process satisfying $\E \log \left(1+ \left| L(1) \right| \right)< \infty$ and $\lambda>0$. The correlation function of the OU type process is always exponential. One may obtain a bit more flexible correlation structure by considering superposition $\sum_{k=1}^m w_k V^{(\lambda_k, L)}(t)$ of independent OU type processes for some weights $w_k$, $k=1,\dots,n$. The supOU processes generalize this idea to infinite superpositions. One can also view superposition as averaging over $\lambda$ randomized according to $\pi$ which formally corresponds to
\begin{equation*}
 Y(t) = \int_{\R_+} \int_{\R} e^{-\xi t+s} \mathbf{1}_{[0,\infty)} (\xi t -s) dL(s) \pi(d\xi).
\end{equation*}
In the \textit{characteristic quadruple}
\begin{equation}\label{quadruple}
(a,b,\mu,\pi),
\end{equation}
$(a,b,\mu)$ determine the marginal distribution of $X$, while the dependence structure is controlled by $\pi$. For example, by taking $\pi$ with density $p$ satisfying
\begin{equation}\label{regvarofp}
p (x) \sim \alpha \ell(x^{-1}) x^{\alpha-1}, \quad \text{ as } x \to 0.
\end{equation}
for some $\alpha>0$ and some slowly varying function $\ell$, we get that the correlation function satisfies
\begin{equation*}%\label{regvarofACR}
r( \tau) \sim \Gamma(1+\alpha) \ell(\tau) \tau^{-\alpha}, \qquad \text{ as } \tau \to \infty.
\end{equation*}
In particular, for $\alpha \in (0,1)$ one obtains \textit{long-range dependence}, that is, a nonintegrable correlation function. More details about supOU processes can be found in \cite{bn2001,barndorff2011multivariate,barndorff2018ambit,GLST2019Bernoulli}.

\subsection{Limit theorems and intermittency}
Since supOU processes are continuous time processes, instead of partial sums, one may consider the \textit{integrated supOU process} $X=\{X(t), \, t \geq 0\}$ defined by
\begin{equation}\label{integratedsupOU}
X(t) = \int_0^t Y(s) ds,
\end{equation}
which has stationary increments. The limit theorems have been established in \cite{GLT2019Limit} for the finite variance integrated process and in \cite{GLT2019LimitInfVar} for the infinite variance case. Somewhat surprisingly, the type of the limiting process may depend on the behavior of the L\'evy measure $\mu$ near the origin. When this happens, we quantify this behavior by assuming that
\begin{equation}\label{LevyMCond}
\mu \left( [x, \infty) \right) \sim c^+ x^{-\beta} \ \text{ and } \ \mu \left( (-\infty, -x] \right) \sim c^- x^{-\beta} \ \text{  as } x \to 0,
\end{equation}
for some $\beta>0$, $c^+, c^- \geq 0$, $c^++c^->0$. In particular, if \eqref{LevyMCond} holds, then $\beta$ is the Blumenthal-Getoor index of $\mu$: $\beta_{BG} = \inf \left\{\gamma \geq 0 : \int_{|x|\leq 1} |x|^\gamma \mu(dx) < \infty \right\}$.

\begin{theorem}[Thms.~3.1-3.4 in \cite{GLT2019Limit}]\label{thm:supOUlimittheoremfinvar}
Suppose that $Y$ is a supOU process with zero mean, finite variance and the characteristic quadruple \eqref{quadruple} such that $\pi$ has a density $p$ satisfying \eqref{regvarofp} for some $\alpha>0$ and some slowly varying function $\ell$.
Then for some slowly varying function $\widehat{\ell}$, the integrated process \eqref{integratedsupOU} satisfies
\begin{equation}\label{int:limittheorem}
\left\{ \frac{1}{T^H \widehat{\ell}(T)} X(Tt) \right \} \overset{fdd}{\to} \left\{ Z(t) \right\},
\end{equation}
if one of the following holds:
\begin{enumerate}[(i)]
\item $b>0$ and $\alpha \in (0,1)$, in which case $H=1-\alpha/2$ and $Z$ is a fractional Brownian motion,
\item $b=0$, $\alpha \in (0,1)$ and $\beta_{BG}<1+\alpha$, in which case $H=1/(1+\alpha)$ and $Z$ is a stable L\'evy process,
\item $b=0$, $\alpha \in (0,1)$ and \eqref{LevyMCond} holds with $1+\alpha<\beta<2$, in which case $H=1-\alpha/\beta$ and $Z$ is a $\beta$-stable process with dependent increments,
\item $\alpha >1$, in which case $H=1/2$ and $Z$ is a Brownian motion.
\end{enumerate}
\end{theorem}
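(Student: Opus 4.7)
The plan is to work directly with the stochastic integral representation of $X(Tt)$ against the L\'evy basis $\Lambda$. By Fubini I would write
\[
X(Tt) = \int_{\R_+}\int_\R f_{Tt}(\xi,s)\, \Lambda(d\xi, ds), \quad f_{Tt}(\xi,s) = \int_0^{Tt} e^{-\xi u + s}\mathbf{1}_{[0,\infty)}(\xi u - s)\, du,
\]
where $f_{Tt}$ is a piecewise exponential kernel that one can compute in closed form on the regions $s\leq 0$, $0<s<\xi Tt$, and $s\geq \xi Tt$. Since $\Lambda$ has control measure $\pi\times Leb$ with cumulant $\kappa_L$, the cumulant of the rescaled integral is
\[
\log \E\!\left[e^{i\zeta X(Tt)/a_T}\right] = \int_{\R_+}\int_\R \kappa_L\!\left(\frac{\zeta f_{Tt}(\xi,s)}{a_T}\right) p(\xi)\, d\xi\, ds,
\]
and the entire proof reduces to analyzing the limit of this object as $T\to\infty$. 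Finite-dimensional convergence is handled by replacing $\zeta f_{Tt}$ with $\sum_j \zeta_j f_{Tt_j}$ and running the same computation.

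The strategy in each regime is the same three-step procedure: (a) guess the exponent $H$ from the scaling of $\Var X(t)$ (in the finite variance subcases) or from the behavior of $\mu$ near zero (in the stable subcases); (b) perform the substitution $\xi = u/T$, which absorbs the regularly varying density $p(\xi)\sim\alpha\ell(1/\xi)\xi^{\alpha-1}$ near the origin and encodes the long-range dependence through $\alpha$, together with an appropriate rescaling of $s$; and (c) pass to the limit inside the integral. In case (iv) where $\alpha>1$, the correlation function is integrable, the contribution from $\xi$ away from zero is harmless, and a CLT-type argument using $\kappa_L(\zeta) = -b\zeta^2/2 + o(\zeta^2)$ gives $H=1/2$ with Brownian limit. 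In case (i) with $b>0$ and $\alpha\in(0,1)$, the same quadratic expansion suffices, but now the slow decay of $p$ near zero forces $H=1-\alpha/2$ and the double integral collapses to the FBM covariance kernel.

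Cases (ii) and (iii) are where the work really lies. With $b=0$ the Gaussian part vanishes, and one has to rely on the jump structure of $\mu$ via \eqref{LevyMCond}. After the rescaling the argument of $\kappa_L$ is no longer uniformly small; instead it explores the tail $\mu([x,\infty))\sim c^+ x^{-\beta}$, and an additional substitution in the jump integral produces either a $(1+\alpha)$-stable or a $\beta$-stable cumulant. The dichotomy $\beta_{BG}<1+\alpha$ versus $1+\alpha<\beta<2$ decides which mechanism dominates: if the jumps of $L$ near zero are relatively mild (subcase (ii)) the limit comes from aggregating independent small jumps across many $\xi$-scales and is a $(1+\alpha)$-stable L\'evy process with $H=1/(1+\alpha)$; if they are heavier (subcase (iii)) the long-memory kernel binds these jumps together into a $\beta$-stable process with dependent increments, and the exponent is forced to $H=1-\alpha/\beta$ by balancing the two regularly varying tails.

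The main obstacle will be justifying the interchange of limit and integral in the cumulant formula, especially in the infinite-variance regime, where no single moment bound dominates $\kappa_L$ globally. One has to split the $(\xi,s)$-domain according to whether $|\zeta f_{Tt}(\xi,s)/a_T|$ is small (where the Taylor remainder of $\kappa_L$ at zero provides a $T$-uniform bound) or of order one (where the L\'evy measure tail from \eqref{LevyMCond} gives the bound), and paste these regions together by a dominated convergence argument with a $T$-uniform majorant. A secondary obstacle, specifically in case (iii), is to verify that the pointwise limit $(\xi,s)\mapsto \lim_{T\to\infty}f_{Tt}(\xi/T, s\cdot T^{?})/a_T$ is integrable against the limiting stable random measure in the sense needed to define a stable process with dependent increments and to identify its finite-dimensional distributions.
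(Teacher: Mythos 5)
The first thing to say is that the paper contains no proof of this statement: it is imported verbatim from Thms.~3.1--3.4 of \cite{GLT2019Limit}, so your attempt can only be compared with the proofs in that reference. In outline, your plan does coincide with them: the stochastic-Fubini representation $X(Tt)=\int\int f_{Tt}(\xi,s)\,\Lambda(d\xi,ds)$, the cumulant identity $\log \E e^{i\zeta X(Tt)/a_T}=\int\int \kappa_L\left(\zeta f_{Tt}(\xi,s)/a_T\right)\pi(d\xi)\,ds$, the substitution $\xi=u/T$ that absorbs the regular variation \eqref{regvarofp}, and the case split according to which part of $\kappa_L$ survives the rescaling are exactly the mechanism used there. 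Your identification of the index $1+\alpha$ in case (ii) and of $H=1-\alpha/\beta$ in case (iii) by balancing the two regularly varying tails is also correct.

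There is, however, one genuine error, not just an omission. You write $\kappa_L(\zeta)=-b\zeta^2/2+o(\zeta^2)$ and claim that in case (i) ``the same quadratic expansion suffices.'' First, for zero-mean finite-variance $L$ the second cumulant is $b+\int_{\R}x^2\mu(dx)$, not $b$; this is harmless in case (iv), where $\alpha>1$ places $q=2$ strictly below the intermittency threshold $\alpha/(1-H)=2\alpha$ and the Brownian limit indeed carries the full variance. But in case (i) with $\mu\not\equiv 0$ the Taylor expansion at the origin is invalid precisely on the region that produces the limit: the variance integral $\int\int f_{Tt}^2\,ds\,\pi(d\xi)\asymp T^{2-\alpha}$ is dominated by $\xi\asymp 1/T$, where $f_{Tt}(\xi,s)\asymp \xi^{-1}\asymp T$ and hence $|\zeta f_{Tt}/a_T|\asymp T^{\alpha/2}\to\infty$ for $a_T=T^{1-\alpha/2}$. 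Expanding anyway would produce an FBM with variance proportional to $b+\int x^2\mu(dx)$, whereas the true limit involves $b$ alone (compare the constant $\widetilde{\sigma}^2$ in the Gaussian subsection of this paper): the jump component contributes variance of the \emph{same} order $T^{2-\alpha}$ yet vanishes in the fdd limit, because its natural normalizations $T^{1/(1+\alpha)}$ and $T^{1-\alpha/\beta}$ from cases (ii)--(iii) are strictly smaller than $T^{1-\alpha/2}$ --- this non-convergence of second moments is exactly the intermittency the surrounding paper is about, so getting it wrong here defeats the purpose. The correct route decomposes $L$ into independent Gaussian and pure-jump parts, keeps the Gaussian cumulant exactly (it is exactly quadratic, so no expansion is needed), and kills the jump contribution on the critical region using $\kappa_{L_J}(\theta)=o(\theta^2)$ as $|\theta|\to\infty$, valid whenever $\int x^2\mu(dx)<\infty$. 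Beyond this point, bear in mind that your text is a roadmap rather than a proof: the two obstacles you correctly flag --- a $T$-uniform majorant justifying the limit--integral interchange, and the integrability/identification of the limiting kernel against the $\beta$-stable basis in case (iii) --- are precisely where the substance of \cite{GLT2019Limit} lies, and they are named, not executed.
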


The $\beta$-stable process from \textit{(iii)} in Theorem \ref{thm:supOUlimittheoremfinvar} has a stochastic integral representation
\begin{equation*}%\label{Zalphabeta}
Z(t) = \int_{\R_+} \int_{\R} \left( \mathfrak{f}(\xi, t-s) - \mathfrak{f}(\xi, -s) \right) K(d\xi, ds),
\end{equation*}
where $\mathfrak{f}$ is given by
\begin{equation*}%\label{mff}
\mff(x,u) = \begin{cases}
1-e^{-xu}, & \text{ if } x>0 \text{ and } u>0,\\
0, & \text{ otherwise},
\end{cases}
\end{equation*}
and $K$ is a $\beta$-stable L\'evy basis on $\R_+ \times \R$ with control measure $k(d\xi, ds)=\alpha \xi^{\alpha} d\xi ds$. It is a $H=1-\alpha/\beta$ self-similar process with stationary increments.

The convergence of finite dimensional distributions in \eqref{int:limittheorem} can be extended to weak convergence in some cases \cite[Thm.~3.5]{GLT2019Limit}. The limit theorems in the infinite variance case were obtained in \cite{GLT2019LimitInfVar} and cover the case when the marginal distribution of the supOU process $Y$ belongs to the domain of attraction of a stable law, that is, $Y(1)$ has balanced regularly varying tails:
\begin{equation}\label{regvarofX}
P(Y(1)>x) \sim p k(x) x^{-\gamma} \quad \text{and} \quad P(Y(1)\leq - x) \sim q k(x) x^{-\gamma}, \quad  \text{ as } x\to \infty,
\end{equation}
for some $p,q \geq 0$, $p+q>0$, $0<\gamma<2$ and some slowly varying function $k$. If $\gamma=1$, assume that $p=q$. The limiting behavior depends additionally on the regular variation index $\gamma$ of the marginal distribution of the supOU process. The class of possible limiting processes is the same as in Thm.~\ref{thm:supOUlimittheoremfinvar}, except for Brownian motion which never appears in the limit in the infinite variance case. See \cite{GLT2019LimitInfVar} for details.

That integrated supOU processes may be intermittent has been showed in \cite{GLST2016JSP,GLST2019Bernoulli}. Note, however, that if the supOU is Gaussian, then there is no intermittency.

\begin{theorem}\label{thm:supOUlimittheoremmoments}
Suppose in addition to the assumptions of Thm.~\ref{thm:supOUlimittheoremfinvar} that there exists $a>0$ such that $\E e^{a |Y(t)|} <\infty$ and that $\alpha$ is integer if $\alpha>1$ in \eqref{regvarofp}. If $Y$ is not purely Gaussian, then the scaling function $\tau$ of the integrated process $X$ is
\begin{equation}\label{tausupOU}
\tau(q) = \begin{cases}
H q, & 0\leq q\leq \frac{\alpha}{1-H},\\
q-\alpha, & q\geq \frac{\alpha}{1-H},
\end{cases}
\end{equation}
where $H$ is as in Theorem \ref{thm:supOUlimittheoremfinvar}. If $Y$ is purely Gaussian, then $\tau(q) = \left(1-\min\{1,\alpha\}/2\right) q$ for every $q\geq 0$.
\end{theorem}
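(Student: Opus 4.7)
The plan is to combine the limit theorem (Thm.~\ref{thm:supOUlimittheoremfinvar}) with a direct cumulant computation based on the stochastic integral representation of $Y$. The scaling function consists of two linear pieces joined at $q=\alpha/(1-H)$, and we would handle each by a separate argument.

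For the low-moment regime $0 \leq q \leq \alpha/(1-H)$ we exploit the limit theorem. The exponential moment assumption $\E e^{a|Y(t)|}<\infty$ makes $\{|X(Tt)/a_T|^q\}_{T>0}$ uniformly integrable in this range, so the convergence in distribution $X(Tt)/a_T \to Z(t)$ upgrades to convergence of moments. Because $Z$ is $H$-self-similar, applying \cite[Thm.~1]{GLST2019Bernoulli} yields $\tau(q)=Hq$ on this interval.

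For the high-moment regime $q \geq \alpha/(1-H)$ we would use the stochastic integral form $X(t) = \int_{\R_+}\int_{\R} F_t(\xi,u)\,\Lambda(d\xi,du)$, where $F_t(\xi,u) = \int_0^t e^{-\xi v + u}\1_{[0,\infty)}(\xi v - u)\,dv$ is obtained by stochastic Fubini. For integer $k$ above the threshold, the L\'evy--Khintchine identity for integrals against an infinitely divisible random measure gives
\begin{equation*}
\kappa_k(X(t)) = \kappa_k(L(1)) \int_{\R_+}\int_{\R} F_t(\xi,u)^k\,\pi(d\xi)\,du.
\end{equation*}
Via the substitution $\xi \mapsto \xi/t$ and the regular variation of $p$ at zero with index $\alpha$ (cf.~\eqref{regvarofp}), the double integral should asymptotically behave as $C_k\, t^{k-\alpha}$ up to a slowly varying factor; the $k$-th cumulant then dominates the lower-order cumulants (whose growth is capped by the self-similar scaling $t^{Hk}$ from the previous step) and yields $\E|X(t)|^k \sim t^{k-\alpha}$, hence $\tau(k)=k-\alpha$. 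Passing to non-integer $q$ relies on convexity of $\tau$ (Prop.~\ref{propertiesoftau}(i)): two linear pieces that match at a sequence of integer abscissae must coincide with those lines on the entire intervals, and continuity at the break $q=\alpha/(1-H)$ is automatic since $Hq = q-\alpha$ precisely there. In the purely Gaussian case, $\kappa_k(L(1))=0$ for $k\geq 3$, so there is no high-moment regime to analyze and $\tau(q)=Hq$ for all $q\geq 0$ with $H = 1-\min\{1,\alpha\}/2$.

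The main obstacle will be the precise asymptotic analysis of the cumulant integral. One must show that its leading order is governed by the values of $\pi$ near zero, corresponding to the slow-decay OU components, and extract the $t^{k-\alpha}$ rate via a Karamata-type argument, while simultaneously ruling out competing contributions from the tails of $\pi$ and from the inner variable $u$. Case (iii) of Thm.~\ref{thm:supOUlimittheoremfinvar} is the most delicate: there $H = 1-\alpha/\beta$ involves both $\alpha$ and the Blumenthal--Getoor index $\beta$, the transition point $\alpha/(1-H)=\beta$ coincides with that index, and the analysis must capture the joint effect of \eqref{regvarofp} and \eqref{LevyMCond} on $\kappa_k(L(1))\int\!\!\int F_t^k$. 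The integer-$\alpha$ assumption when $\alpha>1$ is exactly what prevents fractional contributions from subleading cumulants from interfering with the leading $t^{k-\alpha}$ term.
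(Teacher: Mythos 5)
The paper does not prove this theorem itself: it imports it from \cite{GLST2016JSP,GLST2019Bernoulli} (with the Gaussian statement from \cite[Thm.~4.1]{GLT2019Limit}), so the comparison target is the proof in those works. Your high-moment regime matches their mechanism quite closely: the cumulant identity $\kappa_k(X(t))=\kappa_k(L(1))\int\!\!\int F_t(\xi,u)^k\,\pi(d\xi)\,du$, a Karamata argument on \eqref{regvarofp} giving $t^{k-\alpha}$ up to slowly varying factors, and convex interpolation. But your low-moment regime contains a genuine gap: the claim that $\E e^{a|Y(t)|}<\infty$ ``makes $\{|X(Tt)/a_T|^q\}$ uniformly integrable'' for $q\le\alpha/(1-H)$ is circular. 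Exponential moments of the stationary marginal give finiteness of $\E|X(t)|^{q}$ at each fixed $t$, but uniform integrability requires $\sup_T \E|X(Tt)/a_T|^{q+\varepsilon}<\infty$, which (up to slowly varying factors) is exactly the statement $\tau(q+\varepsilon)\le H(q+\varepsilon)$ that you are trying to prove. Worse, UI genuinely fails just above the threshold --- that failure \emph{is} intermittency --- and in cases \textit{(ii)} and \textit{(iii)} of Thm.~\ref{thm:supOUlimittheoremfinvar} even $\E\bigl(X(T)/a_T\bigr)^2\to\infty$, since $\kappa_2(X(T))\asymp T^{2-\alpha}\ell(T)$ while $a_T^2\asymp T^{2H}$ with $2H<2-\alpha$. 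In those cases the low range $(0,1+\alpha)$, resp.\ $(0,\beta)$, contains no integer $\ge 2$, so neither UI via a higher moment nor your integer-anchor convexity can deliver $\tau(q)=Hq$ there. The cited proofs get the lower bound $\tau(q)\ge Hq$ cheaply from the limit theorem via Fatou/portmanteau ($\liminf_T\E|X(Tt)/a_T|^q\ge\E|Z(t)|^q>0$), but the matching upper bound for fractional $q$ below the threshold is the hard technical core and is obtained by direct computation of fractional moments, decomposing the L\'evy basis into a small-jump part with exponential moments and an independent compound-Poisson part (cf.\ the analogous decomposition in \cite{GLT2019MomInfVar}), not by uniform integrability.

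Two secondary inaccuracies in the high regime. First, ``lower-order cumulants whose growth is capped by $t^{Hk}$'' is wrong: $\kappa_j(X(t))\asymp t^{j-\alpha}\ell(t)$ for \emph{every} $j\ge 2$ (cumulants are insensitive to $H$ and to $\beta$); the correct bookkeeping is over partitions, where a block structure $k_1+\dots+k_m=k$ contributes $t^{k-m\alpha}$, maximized at $m=1$. Your conclusion survives, but only in case \textit{(i)} do the products of $\kappa_2$'s happen to equal $t^{Hk}$; in cases \textit{(ii)}--\textit{(iii)} the subleading terms are $t^{k-2\alpha}$. Second, odd cumulants may vanish (e.g.\ symmetric $\mu$) and $\E X(t)^k\ne\E|X(t)|^k$, so the moment asymptotics must be run at even integers and interpolated; with anchors spaced $2$ apart and a non-integer threshold $q^*=\alpha/(1-H)$ (equal to $1+\alpha$ or $\beta$), your pinching on $(q^*,\text{first anchor})$ gives the lower bound $\tau(q)\ge q-\alpha$ from nondecreasing difference quotients through the anchors above, but the upper bound on that interval needs $\tau(q^*)\le Hq^*$, which again rests on the unresolved low-range analysis. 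Finally, for $\alpha>1$ one has $\kappa_j(X(t))\asymp t$ for $j<\alpha$, so the partition maximization becomes $\sum_i\max(1,k_i-\alpha)$ and the integer-$\alpha$ hypothesis enters at the Karamata boundary $j=\alpha$ (where logarithmic corrections can appear); your guess about its role is right in spirit but misplaces the mechanism.
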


Eq.~\eqref{tausupOU} implies that $q \mapsto \tau(q)/q$ is strictly increasing on $(\alpha/(1-H), \infty)$ and hence the integrated supOU is intermittent when \eqref{tausupOU} holds. Note that this corresponds to \eqref{e:biscaletau} with $a=\alpha$ and $b=1$. If the supOU process is purely Gaussian, then there is no intermittency. In the infinite variance case, the range of finite positive order moments is limited and intermittency appears only in specific scenarios (see \cite{GLT2019MomInfVar} for details).

\subsection{Large deviations of the rate of growth}

From Thm.~\ref{thm:mainGEapp} we can prove the following large deviation bounds for the rate of growth.

\begin{theorem}\label{thm:supOUfv}
Suppose that the assumptions of Thm.~\ref{thm:supOUlimittheoremmoments} hold for the non-Gaussian supOU process and that $0 \in \Int (\mathcal{D}_\tau)$ where $\mathcal{D}_\tau = \left\{ q \in \R : \tau(q)<\infty \right\}$ and $\tau$ is the scaling function of the integrated process $X$. Then for a Borel set $A\subset \R$
\begin{equation}\label{e:LDPsetA}
\begin{aligned}
- \inf_{x \in \Int(A) \cap \{H,1\} } \tau^* (x) &\leq \liminf_{t\to \infty} \frac{1}{\log t} \log P \left( \frac{\log |X(t)|}{\log t} \in A \right)\\
&\leq \limsup_{t\to \infty} \frac{1}{\log t}  \log P \left( \frac{\log |X(t)|}{\log t} \in A \right) \leq - \inf_{x \in \cl (A)} \tau^* (x),
\end{aligned}
\end{equation}
where $H$ is as in Thm.~\ref{thm:supOUlimittheoremfinvar} and
\begin{equation}\label{e:tau*intheproof}
\tau^*(x) = \begin{cases}
\max \left\{\sup_{q <0} \left\{ qx - \tau(q) \right\}, \ 0 \right\}, & \text{ if } x < H,\\
\frac{\alpha}{1-H} x - \frac{\alpha H}{1-H}, & \text{ if } H \leq x \leq 1,\\
\infty, & \text{ if } x > 1.
\end{cases}
\end{equation}
\end{theorem}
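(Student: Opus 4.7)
The plan is to apply Theorem~\ref{thm:mainGEapp} directly, with the scaling function $\tau$ of \eqref{tausupOU} playing the role of $\Lambda$. The hypotheses of that theorem are in place: Thm.~\ref{thm:supOUlimittheoremmoments} gives the explicit limit in \eqref{deftau} for every $q\geq 0$, the standing assumption $0\in\Int(\mathcal{D}_\tau)$ supplies finiteness of $\tau$ in a neighbourhood of the origin, and the convention that $\tau(q)=+\infty$ whenever the corresponding moment is infinite extends the limit to every $q\in\R$ as an extended real. Once $\tau^*$ is computed and $\{H,1\}$ is shown to lie inside the exposed-point set $E$ of Thm.~\ref{thm:mainGEapp}(ii), the bound \eqref{e:LDPsetA} follows, because restricting the infimum from $\Int(A)\cap E$ to the smaller set $\Int(A)\cap\{H,1\}$ yields a weaker, still valid, lower bound.

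For the Legendre transform I split the supremum at $q=0$. Using \eqref{tausupOU}, $qx-\tau(q)=q(x-H)$ on $[0,\alpha/(1-H)]$ and $qx-\tau(q)=q(x-1)+\alpha$ on $[\alpha/(1-H),\infty)$; both pieces are affine in $q$, so the supremum over $q\geq 0$ is attained at an endpoint and equals $0$ for $x\leq H$, $\alpha(x-H)/(1-H)$ for $H\leq x\leq 1$ (the two pieces agree at the kink), and $+\infty$ for $x>1$. For $q<0$, Prop.~\ref{propertiesoftau}(iv) combined with $\inf_{q'>0}\tau(q')/q'=H$ (read off from \eqref{tausupOU}) gives $\tau(q)\geq Hq$, so $qx-\tau(q)\leq q(x-H)$, which is nonpositive once $x\geq H$ and therefore contributes nothing beyond $0$; when $x<H$ this branch may dominate, and is retained in \eqref{e:tau*intheproof} as $\sup_{q<0}\{qx-\tau(q)\}$. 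Taking the maximum of the two halves produces \eqref{e:tau*intheproof}.

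For the exposedness check, the exponential-moment hypothesis forces $[0,\infty)\subseteq\mathcal{D}_\tau$, so any positive slope is admissible as an exposing hyperplane. At $x=H$ I take any $\lambda\in(0,\alpha/(1-H))$: for $y>H$, $\tau^*(y)\geq\alpha(y-H)/(1-H)>\lambda(y-H)$; for $y<H$, $\tau^*(y)\geq 0>\lambda(y-H)$ because $y-H<0$ and $\lambda>0$. At $x=1$ I take any $\lambda>\alpha/(1-H)$: for $y>1$ the required inequality is trivial since $\tau^*(y)=+\infty$; for $H\leq y<1$ it simplifies, after subtracting $\alpha$, to $\alpha(y-1)/(1-H)>\lambda(y-1)$, which holds because $y-1<0$ and $\lambda>\alpha/(1-H)$; for $y<H$ the affine minorant $\alpha+\lambda(y-1)$ is already strictly negative while $\tau^*(y)\geq 0$. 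Hence $\{H,1\}\subseteq E$ with admissible exposing hyperplanes, and \eqref{e:LDPsetA} follows from Thm.~\ref{thm:mainGEapp}.

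The main obstacle is that $\tau(q)$ for $q<0$ is not known in closed form under the stated hypotheses, which is why the $x<H$ branch of \eqref{e:tau*intheproof} must be kept as the abstract supremum $\sup_{q<0}\{qx-\tau(q)\}$ (controlled only from below by $0$) rather than a concrete piecewise function. Everything else is bookkeeping with the piecewise form \eqref{tausupOU} and a routine application of the convex inequality in Prop.~\ref{propertiesoftau}(iv).
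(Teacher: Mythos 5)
Your proposal is correct and follows essentially the same route as the paper's proof: apply Thm.~\ref{thm:mainGEapp} with $\Lambda=\tau$, split the Legendre supremum at the kinks of \eqref{tausupOU}, control the unknown negative-$q$ branch via Prop.~\ref{propertiesoftau}(iv) (giving $\tau(q)\geq Hq$ for $q<0$, hence the contribution $0$ for $x\geq H$), and restrict the lower-bound infimum from $E$ to $\{H,1\}$. In fact you go slightly beyond the paper, which merely asserts $\{H,1\}\subset E$: your explicit choice of exposing hyperplanes $\lambda\in(0,\alpha/(1-H))$ at $x=H$ and $\lambda>\alpha/(1-H)$ at $x=1$, together with the observation that $[0,\infty)\subseteq\mathcal{D}_\tau$ makes these admissible, is a correct verification of that claim.
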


\begin{proof}
To apply Thm.~\ref{thm:mainGEapp}, we first compute the Legendre transform $\tau^*$ from the expression for $\tau$ given in Thm.~\ref{thm:supOUlimittheoremmoments}:
\begin{align}
&\tau^*(x) = \max \left\{ \sup_{q <0} \left\{ qx - \tau(q) \right\}, \ \sup_{0\leq q \leq \alpha/(1-H)} \left\{ q \left(x - H\right) \right\}, \ \sup_{q >\alpha/(1-H)} \left\{ q \left(x - 1\right) + \alpha \right\} \right\}\nonumber\\
&\quad =\begin{cases}
\max \left\{\sup_{q <0} \left\{ qx - \tau(q) \right\}, \ 0, \ \frac{\alpha}{1-H} x - \frac{\alpha H}{1-H} \right\}, & \text{ if } x < H,\\
\max \left\{\sup_{q <0} \left\{ qx - \tau(q) \right\}, \ \frac{\alpha}{1-H} x - \frac{\alpha H}{1-H}, \ \frac{\alpha}{1-H} x - \frac{\alpha H}{1-H} \right\}, & \text{ if } H \leq x \leq 1,\\
\max \left\{\sup_{q <0} \left\{ qx - \tau(q) \right\}, \ \frac{\alpha}{1-H} x - \frac{\alpha H}{1-H}, \ \infty \right\}, & \text{ if } x > 1.
\end{cases}\label{e:m3}
\end{align}
Computing $\tau^*$ requires knowing $\tau(q)$ for negative $q$ but we avoid this by using the bound given in Prop.~\ref{propertiesoftau}\textit{(iv)}. Since $\inf_{q'>0} \frac{\tau(q')}{q'} = \min \{ \inf_{0< q'\leq \alpha/(1-H)} H, \ \allowbreak \inf_{q'\geq \alpha/(1-H)} \left(1- \frac{\alpha}{q'}\right) \} = H$, we get from \eqref{e:sfnegqbound} that for $q<0$, $\tau(q) \geq H q$. By using this bound we get
\begin{equation*}
\sup_{q <0} \left\{ qx - \tau(q) \right\} \leq \sup_{q <0} \left\{ qx - H q \right\} =\begin{cases}
\infty, & \text{ if } x < H,\\
0, & \text{ if } x \geq H.
\end{cases}
\end{equation*}
The bound $\infty$ for $x < H$ is not useful, but the second bound $0$ is useful for \eqref{e:m3} and yields \eqref{e:tau*intheproof}. By Thm.~\ref{thm:mainGEapp}, \eqref{e:LDPsetA} follows with the infimum on the left-hand side taken over $ \Int(A) \cap E$ for $E$ the set of exposed points of $\tau^*$ whose exposing hyperplane belongs to $\Int (\mathcal{D}_{\tau})$. In our case, however, $\{H, 1\} \subset E$ giving \eqref{e:LDPsetA}.
\end{proof}

We note here two special cases. For $A=\left(1- \varepsilon, 1+ \varepsilon\right)$ we get from \eqref{e:LDPsetA}
\begin{equation*}
\begin{aligned}
-\alpha =- \tau^* \left( 1 \right) &\leq\liminf_{t\to \infty} \frac{P \left( t^{1-\varepsilon} <  |X(t)| < t^{1+\varepsilon} \right)}{\log t}\\
&\leq \limsup_{t\to \infty} \frac{\log P \left( t^{1-\varepsilon} <  |X(t)| < t^{1+\varepsilon} \right) }{\log t} \leq - \tau^* \left( 1-\varepsilon \right) = -\alpha + \frac{\varepsilon \alpha}{1-H},
\end{aligned}
\end{equation*}
and for $A=(1+\varepsilon, \infty)$ we obtain
\begin{equation}\label{e:thm51ii}
\lim_{t\to \infty} \frac{\log P \left( |X(t)| > t^{1+\varepsilon} \right)}{\log t}  = -\infty,
\end{equation}
which shows that the probability of rates greater than $1$ decays faster than any power of $t$. Recall that Prop.~\ref{prop:logYlimit} implies that
\begin{equation*}
\lim_{t\to \infty} P  \left( t^{H-\varepsilon} <  |X(t)| < t^{H+\varepsilon} \right) = 1.
\end{equation*}
On the other hand, Thm.~\ref{thm:supOUfv} shows that while the integrated supOU process has one typical order of magnitude, it may also exhibit values of the greater order on a random island probability of which decays as a power function of $t$. This behavior of the process is responsible for the unusual behavior of the moments and causes a change-point in the scaling function.

Fig.~\ref{fig:subsec:fvsup} show the scaling function given \eqref{tausupOU} and its Legendre transform computed in \eqref{e:tau*intheproof}. On the abscissa one reads the rates of growth: $H$ and $1$ for the case of Thm.~\ref{thm:supOUfv}. On the ordinate, one can read the probability of these rates: probability of rate $H$ is roughly $t^0=1$ and probability of rate $1$ is roughly $t^{-\alpha}$.

\begin{figure}[h!]
\centering
\begin{subfigure}[b]{0.45\textwidth}
\begin{tikzpicture}[domain=0:3]
\begin{axis}[
axis y line=middle, axis x line=bottom,
xlabel=$q$, xlabel style={at=(current axis.right of origin), anchor=west},
ylabel=$\tau(q)$, ylabel style={at=(current axis.above origin), anchor=south},
xtick={0,1.25},
xticklabels={$0$,$\frac{\alpha}{1-H}$},
xmin=-0,
xmax=4,
ymajorticks=false
]
\addplot[thick,white!20!blue,domain=0:1.25]{0.6*x} node [pos=0.5,left]{$Hq\ $};
\addplot[thick,white!20!blue,domain=1.25:3]{x-0.5} node [pos=0.5,left]{$bq-a\ $};
\addplot[dashed] coordinates {(1.25,0) (1.25,0.75)};
\end{axis}
\end{tikzpicture}
\caption{Scaling function $\tau$}
\label{fig:subsec:fvsuptau}
\end{subfigure}
\hfill
\begin{subfigure}[b]{0.45\textwidth}
\begin{tikzpicture}[domain=0:2]
\begin{axis}[
axis lines=middle,
xlabel=$x$, xlabel style={at=(current axis.right of origin), anchor=west},
ylabel=$\tau^*(x)$, ylabel style={at=(current axis.above origin), anchor=south},
xtick={0.6,1},
xticklabels={$H$,$1$},
xmin=-0.02,
xmax=1.5,
ytick={0,0.5},
yticklabels={$0$,$\alpha$},
extra y ticks={0},
ymin=-0.02,
ymax=0.7
]
\draw[black!50!green,fill=black!50!green,fill opacity=0.2] (axis cs:0.6,0) circle (.65ex);
\draw[black!50!green,fill=black!50!green,fill opacity=0.2] (axis cs:1,0.5) circle (.65ex);
\addplot[thick,white!20!purple,domain=0.6:1]{1.25*x-0.75} node [pos=0.5,right]{$\frac{\alpha}{1-H} x - \frac{\alpha H}{1-H}$};
%\addplot[thick,white!20!purple,domain=-0.5:0.6]{0.65} node [pos=0.6,above]{$+\infty$};
\addplot[thick,white!20!purple,domain=1:1.5]{0.65} node [pos=0.5,above]{$+\infty$};
\end{axis}
\end{tikzpicture}
\caption{Legendre transform of $\tau$}
\label{fig:subsec:fvsupLT}
\end{subfigure}
\caption{Finite variance supOU (Thm.~\ref{thm:supOUfv})}
\label{fig:subsec:fvsup}
\end{figure}
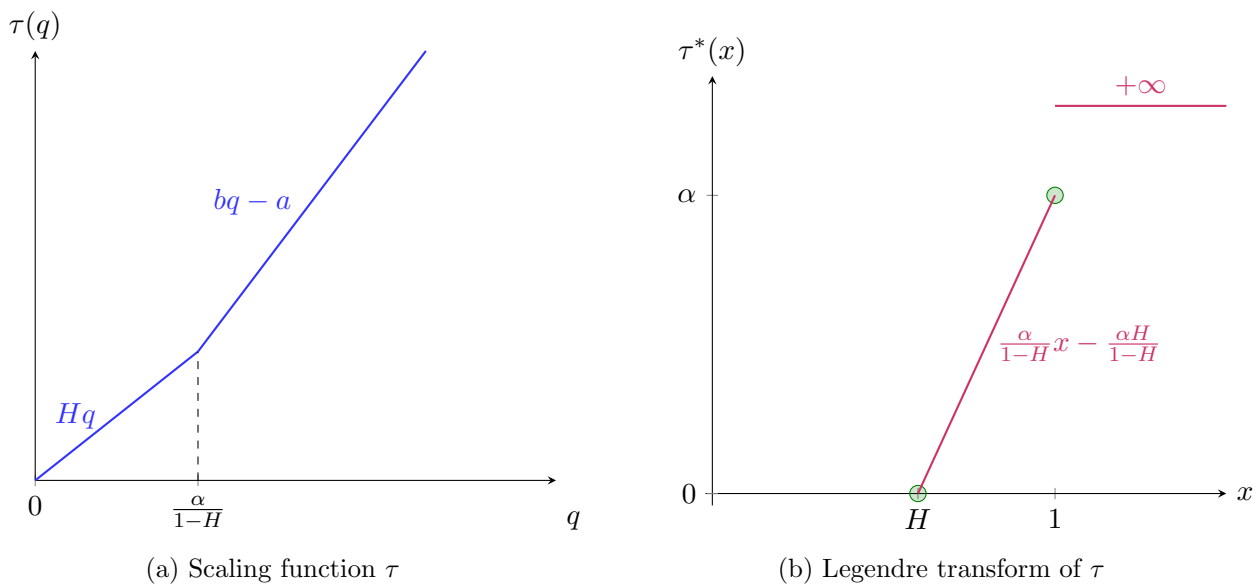

Note that the results of Thm.~\ref{thm:supOUfv} resemble those obtained for the example in Subsec.~\ref{subsec:33} with $a=\alpha$ and $b=1$. Here, however, we do not know whether
\begin{equation*}
\frac{1}{\log t} \log P \left( |X(t)| < t^{H-\varepsilon} \right) \to 0, \ \text{ as } t\to \infty,
\end{equation*}
since we do not know $\inf_{x \in A} \tau^*(x)$ for $A=(-\infty, H - \varepsilon)$ which would involve computing negative order moments of the integrated process.

\subsection{Infinite variance supOU process}
If the supOU process has infinite variance, the range of finite moments is limited but we can still prove the following.

\begin{theorem}\label{thm:supOUinfv}
Suppose $Y$ is a supOU process with the characteristic quadruple \eqref{quadruple} such that \eqref{regvarofX} holds with $0<\gamma<2$, $\E Y(1)=0$ if mean is finite, $\pi$ has a density $p$ satisfying \eqref{regvarofp} for some $\alpha>0$ and some slowly varying function $\ell$ and \eqref{LevyMCond} holds with $0\leq \beta<2$. Suppose also that $0 \in \Int (\mathcal{D}_\tau)$.% where $\mathcal{D}_\tau = \left\{ q \in \R : \tau(q)<\infty \right\}$ and $\tau$ is the scaling function of the integrated process $X^*$.
Then the following holds:
\begin{enumerate}[(i)]
\item If $b=0$ and $\beta<1+\alpha<\gamma$, then for any $\varepsilon>0$, $\varepsilon<1-1/(1+\alpha)$
\begin{align*}
&- \alpha \leq \liminf_{t\to \infty} \frac{1}{\log t} \log P \left( t^{1-\varepsilon} <  |X(t)| < t^{1+\varepsilon} \right)\\
&\hspace{3cm} \leq \limsup_{t\to \infty} \frac{1}{\log t} \log P \left( t^{1-\varepsilon} <  |X(t)| < t^{1+\varepsilon} \right) \leq - (\alpha - (1+\alpha) \varepsilon).
\end{align*}
\item If $b=0$ and $1+\alpha<\beta\leq \gamma$, then for any $\varepsilon>0$, $\varepsilon<\alpha/\beta$
\begin{align*}
&- \alpha \leq \liminf_{n\to \infty} \frac{1}{\log t} \log P \left( t^{1-\varepsilon} <  |X(t)| < t^{1+\varepsilon} \right)\\
&\hspace{3cm} \leq \limsup_{t\to \infty} \frac{1}{\log t} \log P \left( t^{1-\varepsilon} <  |X(t)| < t^{1+\varepsilon} \right) \leq - (\alpha - \beta \varepsilon).
\end{align*}
\end{enumerate}
\end{theorem}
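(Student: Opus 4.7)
The plan is to parallel the proof of Theorem~\ref{thm:supOUfv}, substituting the infinite-variance moment asymptotics of \cite{GLT2019MomInfVar} for those of Theorem~\ref{thm:supOUlimittheoremmoments}. In both parts, these asymptotics yield a piecewise linear scaling function $\tau(q)=Hq$ on $[0,q_0]$ and $\tau(q)=q-\alpha$ on $[q_0,\gamma)$ (with $\tau(q)=\infty$ for $q>\gamma$), where $H=1/(1+\alpha)$ and $q_0=1+\alpha$ in part \textit{(i)}, and $H=1-\alpha/\beta$ and $q_0=\beta$ in part \textit{(ii)}. In both cases $H=\inf_{q>0}\tau(q)/q$ and the two positive linear pieces meet consistently at height $q_0-\alpha$.

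Next I compute the Legendre transform of $\tau$. As in the proof of Theorem~\ref{thm:supOUfv}, Proposition~\ref{propertiesoftau}\textit{(iv)} gives $\tau(q)\geq Hq$ for $q<0$, and hence $\sup_{q<0}\{qx-\tau(q)\}\leq 0$ whenever $x\geq H$. Taking the supremum over the two positive pieces shows that on $[H,1]$,
\begin{equation*}
\tau^*(x)=q_0(x-H),
\end{equation*}
so that $\tau^*(H)=0$ and $\tau^*(1)=q_0(1-H)=\alpha$ in both cases, and $\tau^*$ is linear and strictly increasing on this interval. A parallel computation for $x>1$ gives $\tau^*(x)=\gamma(x-1)+\alpha$, showing that $x=1$ is a kink of $\tau^*$ with left slope $q_0$ and right slope $\gamma$.

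I then apply Theorem~\ref{thm:mainGEapp} with $A=(1-\varepsilon,1+\varepsilon)$. The $\varepsilon$-restrictions in both parts amount to $\varepsilon<1-H$, which places the closed interval $[1-\varepsilon,1+\varepsilon]$ strictly inside $(H,\infty)$, the region where $\tau^*$ is known explicitly and increasing. Hence the upper bound reads $-\inf_{[1-\varepsilon,1+\varepsilon]}\tau^*=-\tau^*(1-\varepsilon)=-(\alpha-q_0\varepsilon)$, which specialises to $-(\alpha-(1+\alpha)\varepsilon)$ in \textit{(i)} and $-(\alpha-\beta\varepsilon)$ in \textit{(ii)}. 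For the lower bound, as long as $q_0<\gamma$, the point $x=1$ is an exposed point of $\tau^*$, and any $\lambda\in(q_0,\gamma)\subset\Int(\mathcal{D}_\tau)$ serves as an exposing hyperplane. Since $1\in A\cap E$, the restricted infimum equals $\tau^*(1)=\alpha$, yielding the lower bound $-\alpha$.

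The main obstacle I expect is twofold. First, I need to faithfully import the scaling function from \cite{GLT2019MomInfVar} and verify that the logarithmic limit \eqref{deftau} holds on the entire positive half-line up to $\gamma$, together with the hypothesis $0\in\Int(\mathcal{D}_\tau)$; once the prefactors in $\E|X(t)|^q\asymp C_q t^{\tau(q)}$ are shown to be positive and finite on this range, the conversion is routine. Second, in the borderline case $\beta=\gamma$ of part \textit{(ii)}, the left and right slopes of $\tau^*$ at $x=1$ collapse, $x=1$ fails to be strictly exposed, and the exposed-point clause of Theorem~\ref{thm:mainGEapp} becomes degenerate. I would handle this by observing that in the collapsed case $\tau^*$ is linear with slope $\beta$ on $[H,\infty)$, so $\tau$ is essentially smooth on its domain and the full large deviation principle applies; the unrestricted lower bound $-\inf_{x\in A}\tau^*(x)=-\tau^*(1-\varepsilon)=-(\alpha-\beta\varepsilon)$ then dominates $-\alpha$, so the claimed bound follows a fortiori.
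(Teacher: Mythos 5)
Your main argument is the paper's argument: the paper proves part \textit{(i)} by importing from \cite{GLT2019MomInfVar} the scaling function $\tau(q)=q/(1+\alpha)$ on $[0,1+\alpha]$ and $\tau(q)=q-\alpha$ on $[1+\alpha,\gamma)$, bounding $\tau(q)\geq q/(1+\alpha)$ for $q<0$ via Prop.~\ref{propertiesoftau}\textit{(iv)}, computing $\tau^*$ piecewise (with the branch $\gamma(x-1)+\alpha$ for $x>1$, exactly as you have it), noting $\{1/(1+\alpha),1\}\subset E$, and applying Thm.~\ref{thm:mainGEapp} with $A=(1-\varepsilon,1+\varepsilon)$; part \textit{(ii)} is dismissed as ``similar.'' Your unified $(H,q_0)$ bookkeeping reproduces this, your arithmetic ($\tau^*(1)=\alpha$, $\tau^*(1-\varepsilon)=\alpha-q_0\varepsilon$, $\varepsilon<1-H$) is correct, and your explicit check that $x=1$ is exposed with exposing hyperplane $\lambda\in(q_0,\gamma)\subset\Int(\mathcal{D}_\tau)$ is a detail the paper asserts without verification.

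The one genuine flaw is your patch for the borderline case $\beta=\gamma$ in \textit{(ii)}, which you rightly flag but resolve incorrectly. Essential smoothness of $\tau$ (the $\Lambda$ of G\"artner--Ellis) requires steepness, $|\tau'(q)|\to\infty$ as $q$ approaches the boundary of $\mathcal{D}_\tau$; in the collapsed case $\tau(q)=(1-\alpha/\beta)q$ on $(0,\gamma)$ with $\tau\equiv\infty$ beyond, so the left derivative at $\gamma$ is the finite constant $1-\alpha/\beta$ and steepness fails. The affine behavior of $\tau^*$ on $[H,\infty)$ that you invoke is precisely the dual signature of this non-steepness, not evidence of smoothness, so the full LDP lower bound is unavailable. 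Worse, no interior point of an affine stretch of $\tau^*$ is exposed, so $A\cap E=\emptyset$ for $A=(1-\varepsilon,1+\varepsilon)$ with $\varepsilon<1-H$, and Thm.~\ref{thm:mainGEapp} then gives only the vacuous lower bound $-\infty$; your ``a fortiori'' conclusion is unsupported. In fairness, the paper never confronts this either: its written proof covers only \textit{(i)}, where $q_0=1+\alpha<\gamma$ strictly, and its ``similar'' proof of \textit{(ii)} works verbatim only when $\beta<\gamma$. Recovering the lower bound $-\alpha$ at $\beta=\gamma$ would need input beyond the scaling function, e.g.\ the finer decomposition of $X$ into independent components from \cite{GLT2019MomInfVar} that the paper alludes to after the theorem.
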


\begin{proof}
We prove only \textit{(i)}, the proof of \textit{(ii)} is similar. Under the assumptions in \textit{(i)}, the scaling function is (see \cite{GLT2019MomInfVar})
\begin{equation*}
	\tau(q) = \begin{cases}
	\frac{1}{1+\alpha} q, & 0<q\leq 1+\alpha,\\
	q-\alpha, & 1+\alpha \leq q < \gamma.
	\end{cases}
\end{equation*}
For the Legendre transform $\tau^*$ we have:
\begin{align*}
\tau^*(x) &= \max \left\{ \sup_{q <0} \left\{ qx - \tau(q) \right\}, \ \sup_{0\leq q \leq 1+\alpha} \left\{ q \left(x - \frac{1}{1+\alpha}\right) \right\}, \ \sup_{1+\alpha<q<\gamma} \left\{ q \left(x - 1\right) + \alpha \right\} \right\}\\
&=\begin{cases}
\max \left\{\sup_{q <0} \left\{ qx - \tau(q) \right\}, \ 0, \ (1+\alpha)x-1 \right\}, & \text{ if } x < \frac{1}{1+\alpha},\\
\max \left\{\sup_{q <0} \left\{ qx - \tau(q) \right\}, \ (1+\alpha)x-1, \ (1+\alpha)x-1 \right\}, & \text{ if } \frac{1}{1+\alpha} \leq x \leq 1,\\
\max \left\{\sup_{q <0} \left\{ qx - \tau(q) \right\}, \ (1+\alpha)x-1, \ \gamma(x-1)+\alpha \right\}, & \text{ if } x > 1.
\end{cases}
\end{align*}
As in the proof of Thm.~\ref{thm:supOUfv}, we use Prop.~\ref{propertiesoftau}\textit{(iv)} to get that for $q<0$
\begin{equation*}
\tau(q) \geq q \min \left\{ \inf_{0< q'\leq 1+\alpha} \frac{1}{1+\alpha}, \ \inf_{1+\alpha \leq q'< \gamma} \left(1- \frac{\alpha}{q'}\right) \right\} =  \frac{1}{1+\alpha} q.
\end{equation*}
Since
\begin{equation*}
\sup_{q <0} \left\{ qx - \tau(q) \right\} \leq \sup_{q <0} \left\{ qx - \frac{1}{1+\alpha} q \right\}=\begin{cases}
\infty, & \text{ if } x < \frac{1}{1+\alpha},\\
0, & \text{ if } x \geq \frac{1}{1+\alpha},
\end{cases}
\end{equation*}
we have
\begin{align*}
\tau^*(x) &= \begin{cases}
\max \left\{\sup_{q <0} \left\{ qx - \tau(q) \right\}, \ 0 \right\}, & \text{ if } x < \frac{1}{1+\alpha},\\
(1+\alpha)x-1, & \text{ if } \frac{1}{1+\alpha} \leq x \leq 1,\\
\gamma(x-1)+\alpha, & \text{ if } x > 1,
\end{cases}
\end{align*}
with $\{\frac{1}{1+\alpha}, 1\} \subset E$. Now we get the statement from \eqref{e:LDPsetA} by taking $A=\left(1- \varepsilon, 1+ \varepsilon\right)$.
\end{proof}

Note that in Thm.~\ref{thm:supOUinfv} we are not able to show that the rates greater that $1$ do not appear as it was shown in \eqref{e:thm51ii} of Thm.~\ref{thm:supOUfv}. This is due to infinite moments of order beyond $\gamma$. For the other combination of parameters not covered by Thm.~\ref{thm:supOUinfv}, the change-point in the shape of the scaling function does not appear in the range of finite moments (see \cite{GLT2019MomInfVar} for details). However, a finer approach could be used to show multiscale behavior by decomposing the integrated process into independent components as in \cite{GLT2019MomInfVar}.

\subsection{Gaussian case}
Suppose $Y$ is a Gaussian supOU process, that is $b>0$ and $\mu\equiv 0$. Assume in addition that $\E Y=0$ and that $\pi$ has a density $p$ satisfying \eqref{regvarofp} for some $\alpha>0$ and some slowly varying function $\ell$. Then the integrated process $X$ is also Gaussian and
\begin{equation*}
\left\{ \frac{1}{T^{1-\alpha/2 } \ell(T)^{1/2}} X(Tt) \right\} \overset{w}{\to} \left\{\widetilde{\sigma} B_H(t) \right\},
\end{equation*}
where $\{ B_H(t)\}$ is standard fractional Brownian motion with self-similarity parameter $H=1-\alpha/2$, $\widetilde{\sigma}^2 = b \Gamma(1+\alpha)/ ((2-\alpha)(1-\alpha))$ and the convergence is weak convergence in $C[0,1]$ (see \cite[Thms.~3.1 and 3.5]{GLT2019Limit}). Moreover, there is no intermittency and $\tau(q)=\left(1-\frac{\alpha}{2}\right) q$ for $q\geq 0$ \cite[Thm.~4.1]{GLT2019Limit}.

This case corresponds to the scenario of Subsec.~\ref{subsec:32} and we get that for any $\varepsilon>0$
\begin{equation*}
\lim_{t\to \infty} \frac{1}{\log t} \log P \left( |X(t)| > t^{1-\frac{\alpha}{2}+\varepsilon} \right) = -\infty,
\end{equation*}
Hence, the probability of $X(t)$ being larger than $t^{1-\frac{\alpha}{2}+\varepsilon}$ decays faster than any negative power of $t$. Since $X$ is Gaussian, finer estimates may be easily obtained, namely the classical large deviation principle and the moderate deviations given in the next theorem.

\begin{theorem}\label{thm:LDPGaussiansupOU}
Suppose $Y$ is a Gaussian supOU process such that $\E Y=0$, $\pi$ has a density $p$ satisfying \eqref{regvarofp} for some $\alpha \in (0,1)$ and some slowly varying function $\ell$ and let $X$ be the integrated process. For any sequence $\{s_t\}$ of positive numbers, $s_t\to \infty$, the process
\begin{equation*}
\frac{1}{\sqrt{s_t}} \frac{1}{t^{1-\alpha/2 } \ell(t)^{1/2}} X(t), \quad t >0,
\end{equation*}
satisfies the large deviation principle with speed $s_t$ and good rate function $\Lambda^*(x)= \frac{1}{2b} \frac{(2-\alpha)(1-\alpha)}{\Gamma(1+\alpha)} x^2$.
\end{theorem}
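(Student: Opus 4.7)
The plan is to apply the Gärtner–Ellis theorem (Thm.~\ref{thm:GE}) with the general speed $\{s_t\}$ directly to the scaled process, taking advantage of the fact that everything in sight is Gaussian.

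First I would observe that since $Y$ is a centered Gaussian supOU process, the integrated process $X(t)=\int_0^t Y(s)\,ds$ is also centered Gaussian. In particular, the only parameter governing its distribution is its variance. From the limit theorem \cite[Thms.~3.1 and 3.5]{GLT2019Limit} we already know that $X(Tt)/(T^{1-\alpha/2}\ell(T)^{1/2}) \overset{fdd}{\to} \widetilde{\sigma}B_H(t)$, and because both sides are Gaussian and the left-hand side has finite variance for each $T$, this convergence in distribution automatically upgrades to convergence of second moments. Setting $a_t=t^{1-\alpha/2}\ell(t)^{1/2}$, this yields the key asymptotic
\begin{equation*}
\lim_{t\to\infty}\frac{\Var X(t)}{a_t^{2}}=\widetilde{\sigma}^{2}=\frac{b\,\Gamma(1+\alpha)}{(2-\alpha)(1-\alpha)}.
\end{equation*}

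Next I would compute the limiting log-moment-generating function. Let $Z_t=X(t)/(\sqrt{s_t}\,a_t)$, so $Z_t$ is centered Gaussian with variance $\Var(X(t))/(s_t a_t^2)$. Since the MGF of a centered Gaussian is $\exp(q^2\sigma^2/2)$, we have
\begin{equation*}
\Lambda_t(q)=\log\E\bigl[e^{qZ_t}\bigr]=\frac{q^{2}}{2}\,\frac{\Var X(t)}{s_t a_t^{2}},
\end{equation*}
so that
\begin{equation*}
\Lambda(q)=\lim_{t\to\infty}\frac{1}{s_t}\Lambda_t(s_tq)=\lim_{t\to\infty}\frac{s_t q^{2}}{2}\,\frac{\Var X(t)}{s_t a_t^{2}}=\frac{\widetilde{\sigma}^{2}}{2}\,q^{2},
\end{equation*}
using the variance asymptotic from the previous step. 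The function $\Lambda$ is finite, smooth, and strictly convex on all of $\R$, hence in particular essentially smooth and lower semicontinuous, and $0\in\Int(\mathcal{D}_\Lambda)=\R$.

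Finally I would apply Thm.~\ref{thm:GE}. Since $\Lambda$ is essentially smooth and lower semicontinuous, the set of exposed points of $\Lambda^{*}$ coincides with $\R$, so the upper and lower bounds match and the full LDP with speed $s_t$ and good rate function $\Lambda^{*}$ follows. A direct Legendre computation gives
\begin{equation*}
\Lambda^{*}(x)=\sup_{q\in\R}\Bigl\{qx-\tfrac{\widetilde{\sigma}^{2}}{2}q^{2}\Bigr\}=\frac{x^{2}}{2\widetilde{\sigma}^{2}}=\frac{1}{2b}\,\frac{(2-\alpha)(1-\alpha)}{\Gamma(1+\alpha)}\,x^{2},
\end{equation*}
as claimed. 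There is no real obstacle here: the only nontrivial input is the second-moment asymptotic, which is a byproduct of the known Gaussian limit theorem; everything else reduces to a one-line Gaussian MGF computation and a direct invocation of Gärtner–Ellis.
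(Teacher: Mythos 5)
Your proof is correct and follows the same skeleton as the paper's: apply the G\"artner--Ellis theorem with general speed $s_t$, use Gaussianity of $X(t)$ to make $\Lambda_t$ exactly quadratic, pass to the quadratic limit $\Lambda(q)=\widetilde{\sigma}^2q^2/2$ (your $s_t$ bookkeeping here is right), and conclude via essential smoothness plus a one-line Legendre computation. The one genuine difference is how the key input, $\Var X(t)/a_t^2\to\widetilde{\sigma}^2$ with $a_t=t^{1-\alpha/2}\ell(t)^{1/2}$, is obtained. The paper derives it from the explicit cumulant formula $\log\E e^{\theta X(t)}=\frac{b}{2}\theta^2\int_0^\infty\int_0^t\bigl(1-e^{-\xi(t-s)}\bigr)\,ds\,\xi^{-1}\pi(d\xi)$ (Eq.~(5.3) of \cite{GLT2019Limit}) together with the integral asymptotics (5.6) and (5.8) of that paper, so the constant $\widetilde{\sigma}^2=b\Gamma(1+\alpha)/((2-\alpha)(1-\alpha))$ emerges from a direct computation. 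You instead extract it softly from the already-stated weak convergence to $\widetilde{\sigma}B_H$. That step is valid, but your justification (``both sides are Gaussian and the left-hand side has finite variance'') is not quite the right reason: in general, convergence in distribution does not upgrade to convergence of moments without uniform integrability, and finiteness of each variance alone gives nothing. What saves you is Gaussianity via characteristic functions: if centered Gaussians satisfy $Z_t\overset{d}{\to}Z$, then $e^{-u^2\Var Z_t/2}\to e^{-u^2\Var Z/2}$ pointwise, which forces $\Var Z_t\to\Var Z$. With that one line supplied, your argument is complete. Note, though, that the two routes are not truly independent: the weak-convergence theorem you invoke is proved in \cite{GLT2019Limit} by essentially the same variance asymptotics the paper reuses, so your version repackages the same input at a higher level. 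What it buys is that no integral asymptotics need to be redone; what the paper's route buys is a self-contained computation that exhibits the limiting variance explicitly rather than reading it off from the normalization of the limit process.
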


\begin{proof}
From \cite[Eq.~(5.3)]{GLT2019Limit} we have that
\begin{equation*}
\psi(\theta) := \log \E \left[ e^{\theta X(t)} \right] = \frac{b}{2} \theta^2  \int_0^\infty \int_0^{t}\left( 1 -e^{-\xi (t - s)} \right) ds  \xi^{-1} \pi(d\xi).
\end{equation*}
We now apply G\"artner-Ellis theorem \cite[Thm.~2.3.6]{dembo1998large} on the family $R(t) =  X(t) / \left( \sqrt{s_t} t^{1-\alpha/2 } \ell(t)^{1/2} \right)$. Then \eqref{e:lambdan} equals
\begin{equation*}
\Lambda_t(\theta) = \frac{b}{2} a_t^{-1} t^{-2+\alpha} \ell(t)^{-1} \theta^2  \int_0^\infty \int_0^{t}\left( 1 -e^{-\xi (t - s)} \right) ds  \xi^{-1} \pi(d\xi)
\end{equation*}
and we have by using \cite[Eqs.~(5.6) and (5.8)]{GLT2019Limit}
\begin{align*}
\frac{1}{a_t} \Lambda_t(t \theta)&= \frac{b}{2} t^{-2+\alpha} \ell(t)^{-1} \theta^2  \int_0^\infty \int_0^{t}\left( 1 -e^{-\xi (t - s)} \right) ds  \xi^{-1} \pi(d\xi)\\
&= \frac{b}{2} t^{-2+\alpha} \ell(t)^{-1} \theta^2 \int_0^{\infty} \left( 1 -e^{-w} \right) \int_{w/t}^{\infty} \xi^{-2} \pi(d\xi) dw\\
& \sim \frac{b}{2} t^{-2+\alpha} \ell(t)^{-1} \theta^2 \frac{\Gamma(1+\alpha)}{(2-\alpha)(1-\alpha)} \ell(t)  t^{2-\alpha} \\
& \sim \frac{b}{2} \frac{\Gamma(1+\alpha)}{(2-\alpha)(1-\alpha)} \theta^2 =:\Lambda(\theta).
\end{align*}
Since $\Lambda$ is essentially smooth and lower semicontinuous, we get the statement.
\end{proof}

For $s_t=t$ Thm.~\ref{thm:LDPGaussiansupOU} gives the classical large deviations in the Gaussian case. If we take $s_t=t^\varepsilon$ for $\varepsilon>0$, then Thm.~\ref{thm:LDPGaussiansupOU} shows that for any Borel set $A\subset \R$
\begin{equation*}
\begin{aligned}
- \inf_{x \in \Int(A) } \Lambda^* (x) &\leq \liminf_{t\to \infty} \frac{1}{t^\varepsilon} \log P \left( \frac{1}{t^{\frac{\varepsilon}{2}}} \frac{1}{t^{1-\alpha/2 } \ell(t)^{1/2}} X(t) \in A \right)\\
&\leq \limsup_{t\to \infty} \frac{1}{t^\varepsilon} \log P \left( \frac{1}{t^{\frac{\varepsilon}{2}}} \frac{1}{t^{1-\alpha/2 } \ell(t)^{1/2}} X(t) \in A \right) \leq - \inf_{x \in \cl (A)} \Lambda^* (x).
\end{aligned}
\end{equation*}
In particular, by taking $A=(M,\infty)$ for some $M>0$ we get that for any $\varepsilon>0$ the probability of large deviation
\begin{equation*}
P \left( \frac{1}{t^{1-\alpha/2 } \ell(t)^{1/2}} X(t) > M t^{\frac{\varepsilon}{2}}\right)
\end{equation*}
decays to zero as
\begin{equation*}
\exp \left\{ -\frac{1}{2b} \frac{(2-\alpha)(1-\alpha)}{\Gamma(1+\alpha)} M^2 t^\varepsilon \right\}, \quad \text{ as } t\to \infty.
\end{equation*}
This contrasts with the intermittent case, e.g.~the case of Thm.~\ref{thm:supOUfv}, where such probabilities decay as a power function of $t$. Hence, the classical large deviation principle with exponentially decaying probabilities does not hold for the supOU processes with intermittency and, in particular, the results of \cite{macci2017asymptotic} can not be extended to the intermittent case.

\section{Simulation}\label{sec:simulations}

We shall illustrate the multiscale behavior by a simple numerical example. Let $B_H$ and $B_b$ be independent fractional Brownian motions with Hurst parameters $H$ and $b$, respectively. We generate the values of the process $X$ at time points $t_n=n \Delta$, $n=1,\dots, T/\Delta$, where $T,\Delta >0$, by putting
\begin{equation*}
X(t_n) = \begin{cases}
B_H(t_n), & \text{ with probability } 1-t_n^{-a},\\
B_b(t_n), & \text{ with probability } t_n^{-a}.
\end{cases}
\end{equation*}
More precisely, we put
\begin{equation}\label{e:simXdef}
X(t_n) = \begin{cases}
B_H(t_n), & \text{ if } U_n=0,\\
B_b(t_n), & \text{ if } U_n=1,
\end{cases}
\end{equation}
where $U_n$, $n=1,\dots, T/\Delta$, are independent, $P(U_n=1)=1-P(U_n=0)=t_n^{-a}$, and independent of $B_H$ and $B_b$. The scaling function of $X$ is the same as in the biscale example of Subsec.~\ref{subsec:33} and is given by \eqref{e:biscaletau} for $q>-1$.

For the figures below we set $T=1000000$, $\Delta=1$, $H=0.6$ and $b=0.8$. Figure \ref{fig:path} shows three simulated sample paths of $X$ for two values of parameter $a$. The multiscaling behavior manifests as bursts along the sample path and these peaks have a magnitude much larger that the typical values of the sample path. The figures for both values, $a=0.8$ and $a=0.6$, are generated using the same sample paths of $B_H$ and $B_b$. One can notice here how lower value of $a$ makes the peaks more frequent. Figure \ref{fig:RoG} plots the rate of growth \eqref{e:RY} for the sample paths shown in Fig.~\ref{fig:path}. We can see here that the rate of growth is typically around $H=0.6$, but the burst also illustrate that the rate $b=0.8$ also appears.

%
%\begin{figure}[h!]
%\centering
%\begin{subfigure}[b]{0.47\textwidth}
%\centering
%\includegraphics[width=\textwidth]{Figpatha}
%\caption{$a=0.8$}
%\label{fig:path:a}
%\end{subfigure}
%\hfill
%\begin{subfigure}[b]{0.47\textwidth}
%\centering
%\includegraphics[width=\textwidth]{Figpathb}
%\caption{$a=0.6$}
%\label{fig:path:b}
%\end{subfigure}
%\caption{Simulated sample paths of $X$ given by \eqref{e:simXdef}}
%\label{fig:path}
%\end{figure}
%
%
%
%
%\begin{figure}[h!]
%\centering
%\begin{subfigure}[b]{0.47\textwidth}
%\centering
%\includegraphics[width=\textwidth]{FigRoGa}
%\caption{$a=0.8$}
%\label{fig:RoG:a}
%\end{subfigure}
%\hfill
%\begin{subfigure}[b]{0.47\textwidth}
%\centering
%\includegraphics[width=\textwidth]{FigRoGb}
%\caption{$a=0.6$}
%\label{fig:RoG:b}
%\end{subfigure}
%\caption{The rate of growth \eqref{e:RY} of simulated paths of $X$ given by \eqref{e:simXdef}}
%\label{fig:RoG}
%\end{figure}

%%%%%%%%%%%%%%%%%%%%%%%
% SMALL png FIGURES FOR SENDING
%%%%%%%%%%%%%%%%%%%%%%

\begin{figure}[h!]
\centering
\begin{subfigure}[b]{0.47\textwidth}
\centering
\includegraphics[width=\textwidth]{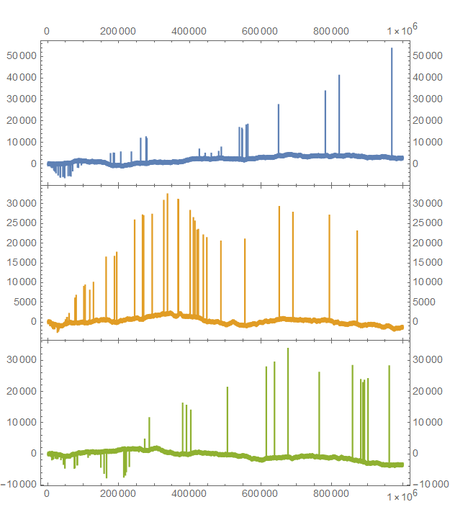}
\caption{$a=0.8$}
\label{fig:path:a}
\end{subfigure}
\hfill
\begin{subfigure}[b]{0.47\textwidth}
\centering
\includegraphics[width=\textwidth]{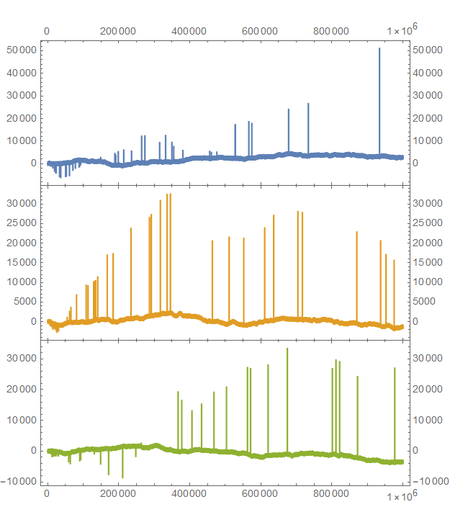}
\caption{$a=0.6$}
\label{fig:path:b}
\end{subfigure}
\caption{Simulated sample paths of $X$ given by \eqref{e:simXdef}}
\label{fig:path}
\end{figure}

\begin{figure}[h!]
\centering
\begin{subfigure}[b]{0.47\textwidth}
\centering
\includegraphics[width=\textwidth]{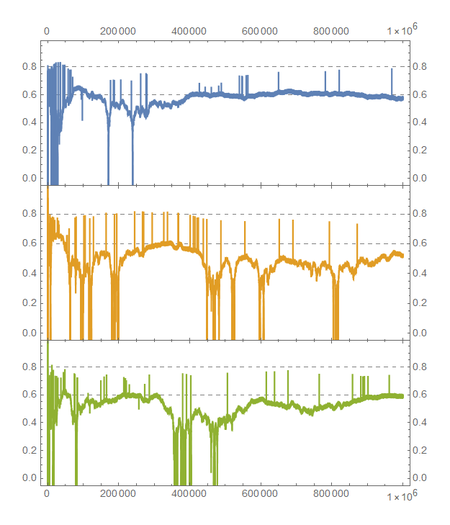}
\caption{$a=0.8$}
\label{fig:RoG:a}
\end{subfigure}
\hfill
\begin{subfigure}[b]{0.47\textwidth}
\centering
\includegraphics[width=\textwidth]{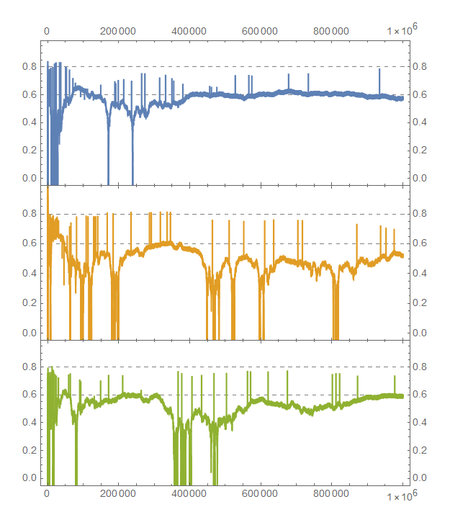}
\caption{$a=0.6$}
\label{fig:RoG:b}
\end{subfigure}
\caption{The rate of growth \eqref{e:RY} of simulated paths of $X$ given by \eqref{e:simXdef}}
\label{fig:RoG}
\end{figure}

\section{Conclusion and discussion}\label{sec6}
The technique used in this paper to show multiscale behavior is very general and is based on the rate of growth of moments which is closely related to the phenomenon of intermittency. However, this approach has some limitations. It is efficient in proving that some rates are significant on the power probability scale, but it may not show that some rates are negligible when they really are (see the example in Subsec.~\ref{subsec:33}). These problems also appear when considering infinite moments. In general, finite moments of positive order help in showing rates beyond some critical rate do not appear (the process never grows faster than some critical rate). On the other hand, finite moments of negative order help showing that rates less than some critical rate do not appear (the process never grows slower than some critical rate). These points are illustrated in Subsecs.~\ref{subsec:31} and \ref{subsec:32}, but also in Sec.~\ref{subsec:34} on the supOU processes.

The scaling function, however, cannot be used to reveal all the scales that the process exhibits. To see this, consider the following extension of the example from Subsec.~\ref{subsec:33}. Suppose that $X(t)$, $t \in \N$, is a sequence given by
\begin{equation*}
X(t) = \begin{cases}
t^{H}, & \text{ with probability } 1-t^{-a/2}-t^{-a},\\
t^{(H+b)/2}, & \text{ with probability } t^{-a/2},\\
t^b, & \text{ with probability } t^{-a},
\end{cases}
\end{equation*}
where $0<H<b$ and $a>0$. The scaling function for $q\in \R$ is the same as in the example from Subsec.~\ref{subsec:33}, namely
\begin{align*}
\tau(q) &= \lim_{t\to \infty} \frac{1}{\log t} \log \left( t^{H q} \left(1-t^{-a/2}-t^{-a} \right) + t^{(H+b)q/2-a/2} +  t^{bq-a} \right)\\
&=\begin{cases}
H q, & \text{ if } q\leq \frac{a}{b-H},\\
bq-a, & \text{ if } q> \frac{a}{b-H}.
\end{cases}
\end{align*}
Hence, just from the form of the scaling function one is not able to reveal that $X(t)$ also exhibits the intermediate scale $t^{(H+b)/2}$. This is to be expected since the scaling function only focuses on the behavior of the moments. It is nevertheless a useful tool.

\bigskip
\bigskip

\textbf{Acknowledgements}
Nikolai N.~Leonenko was supported in particular by Cardiff Incoming Visiting Fellowship Scheme, International Collaboration Seedcorn Fund, Australian Research Council’s Discovery Projects funding scheme (project DP160101366) and the project MTM2015-71839-P of MINECO, Spain (co-funded with FEDER funds). Murad S.~Taqqu was supported in part by the Simons foundation grant 569118 at Boston University. Danijel Grahovac was partially supported by the University of Osijek Grant ZUP2018-31.

%Nikolai N.~Leonenko was supported in particular by Cardiff Incoming Visiting Fellowship Scheme, International Collaboration Seedcorn Fund, Australian Research Council's Discovery Projects funding scheme (project DP160101366)and the project MTM2015-71839-P of MINECO, Spain (co-funded with FEDER funds). Murad S.~Taqqu was supported in part by the Simons foundation grant 569118 at Boston University.
%D.~Grahovac acknowledges the support of University of Osijek grant ZUP2018-31.

\bigskip
\bigskip
\bibliographystyle{abbrv}
\bibliography{References}

\end{document}